\documentclass[english]{article}
\usepackage[T1]{fontenc}
\usepackage[latin1]{inputenc}
\usepackage{geometry}
\geometry{verbose,tmargin=2cm,bmargin=2cm,lmargin=2cm,rmargin=2cm}
\usepackage{float}
\usepackage{mathrsfs}
\usepackage{amsmath}
\usepackage{amssymb}
\usepackage{graphicx}
\usepackage{subfigure}

\makeatletter

%%%%%%%%%%%%%%%%%%%%%%%%%%%%%% LyX specific LaTeX commands.
\floatstyle{ruled}
\newfloat{algorithm}{tbp}{loa}
\providecommand{\algorithmname}{Algorithm}
\floatname{algorithm}{\protect\algorithmname}

%%%%%%%%%%%%%%%%%%%%%%%%%%%%%% User specified LaTeX commands.
%%%%%%%%%%%%%%%%%%%%%%%%%%%%%% LyX specific LaTeX commands.
%% Because html converters don't know tabularnewline

%%%%%%%%%%%%%%%%%%%%%%%%%%%%%% Textclass specific LaTeX commands

\usepackage{amsthm}
\usepackage{mathrsfs}
\usepackage{amsfonts}
\usepackage{epsfig}
\usepackage{bm}
\usepackage{mathrsfs}
\usepackage{enumerate}

\usepackage{subfig}\usepackage[all]{xy}

\newtheorem{ass}{Assumption}[section]
\newtheorem{theorem}{Theorem}[section]
\newtheorem{lem}{Lemma}[section]
\newtheorem{rem}{Remark}[section]
\newtheorem{prop}{Proposition}[section]

\newcounter{hypA}
\newenvironment{hypA}{\refstepcounter{hypA}\begin{itemize}
  \item[({\bf A\arabic{hypA}})]}{\end{itemize}}

\usepackage{babel}

\date{}

\makeatother

\newcommand{\calU}{\mathcal{U}}

\newcommand{\bbE}{\mathbb{E}}

\newcommand{\bbR}{\mathbb{R}}

\begin{document}

\begin{center}

{\Large \textbf{A Multilevel Approach for Stochastic Nonlinear Optimal Control}}

\vspace{0.5cm}

BY AJAY JASRA$^{1}$, JEREMY HENG$^{2}$, YAXIAN XU$^{3}$ \& ADRIAN N. BISHOP$^{4}$

{\footnotesize $^{1,3}$Department of Statistics \& Applied Probability,
National University of Singapore, Singapore, 117546, SG.}
{\footnotesize E-Mail:\,} \texttt{\emph{\footnotesize staja@nus.edu.sg}, \emph{\footnotesize{a0078115@u.nus.edu}} }\\
{\footnotesize $^{2}$ESSEC Business School, 5 Nepal Park, Singapore 139408, SG.}
{\footnotesize E-Mail:\,} \texttt{\emph{\footnotesize heng@essec.edu}}\\
{\footnotesize $^{4}$University of Technology Sydney, AUS.}
{\footnotesize E-Mail:\,} \texttt{\emph{\footnotesize adrian.bishop@uts.edu.au}}\\
\end{center}

\begin{abstract}
We consider a class of finite time horizon nonlinear stochastic optimal control problem, where the control acts additively on the dynamics 
and the control cost is quadratic. This framework is flexible and has found applications in many domains. 
Although the optimal control admits a path integral representation for this class of control problems, 
efficient computation of the associated path integrals remains a challenging Monte Carlo task. 
The focus of this article is to propose a new Monte Carlo approach that significantly improves upon existing methodology. 
Our proposed methodology first tackles the issue of exponential growth in variance with the time horizon by casting  
optimal control estimation as a smoothing problem for a state space model associated with the control problem, 
and applying smoothing algorithms based on particle Markov chain Monte Carlo. 
To further reduce computational cost, we then develop a multilevel Monte Carlo method which allows us to obtain 
an estimator of the optimal control with $\mathcal{O}(\epsilon^2)$ mean squared error with a computational cost of
$\mathcal{O}(\epsilon^{-2}\log(\epsilon)^2)$. 
In contrast, a computational cost of $\mathcal{O}(\epsilon^{-3})$ is required for existing methodology to achieve the same mean squared error. 
Our approach is illustrated on two numerical examples, which validate our theory. 
\\
\textbf{Key words:} Optimal Control; Multilevel Monte Carlo; Markov chain Monte Carlo, Sequential Monte Carlo.
\end{abstract}

\section{Introduction}
We consider a class of finite time horizon nonlinear stochastic optimal control problem, where the control acts additively on the dynamics 
and the control cost is quadratic \cite{kappen2005path,kappen2005linear}. This framework is flexible and has applications in domains 
such as robotics \cite{theodorou2010generalized}, epidemiology \cite{Tornatore2014,Witbooi2015}, reinforcement learning \cite{theodorou2010generalized}, and nonlinear particle smoothing \cite{kappenruiz2016,ruizkappen2017}. 
For this class of control problems, the nonlinear Hamilton-Jacobi-Bellman equation can be reduced to a linear equation by applying a suitable logarithmic transformation. Although this allows the optimal control to admit a closed form expression via the Feynman-Kac formula, 
efficient computation of the associated path integrals remains a challenging Monte Carlo task. 
Simple approaches based on simulating the uncontrolled dynamics and performing normalized importance sampling \cite{bert} 
often suffer from exponential growth in variance with the time horizon. Hence to accurately estimate the optimal control, one might require 
an exponentially commensurate number of Monte Carlo simulations. As optimal importance sampling, i.e. zero variance estimation of the optimal control, 
is achieved when one simulates from the optimally controlled dynamics \cite[Theorem 2]{thijssen2014path}, this prompts an iterative 
procedure \cite{Theodorou_Todorov_2012,thijssen2014path,menchonkappen2018} to estimate optimal control. Although these iterative importance 
control methods can often give substantial variance reduction, they typically require parameterizing the form of the control.
The focus of this article is to propose a new Monte Carlo approach to the above path integral control problem. 
Our proposed methodology first tackles the issue of exponential growth in variance with the time horizon by casting  
optimal control estimation as a smoothing problem for a state space model associated with the control problem, 
and applying state-of-the-art smoothing algorithms based on particle Markov chain Monte Carlo (MCMC) \cite{andrieu}. 

To further reduce computational cost, we then consider the multilevel Monte Carlo (MLMC) method \cite{giles,giles1,hein} 
which is particularly well-suited to the problem at hand as path integrals are expectations w.r.t.~a continuum model, defined by 
the probability law of the uncontrolled stochastic differential equation (SDE).
For numerical implementation to be tractable, one must typically resort to discretizing the continuum model 
(for instance using Euler discretizations of SDEs) and considering expectations w.r.t.~the discretized model. 
As the time discretization becomes more precise, the approximation becomes more accurate, but simultaneously, the associated cost of computation increases.
The MLMC method considers a seemingly trivial telescoping sum representation of
the expectation w.r.t.~the most precise discretization, where the summands are differences of expectations of increasingly coarsely discretized approximations. 
The key idea is to approximate the differences by sampling dependent \emph{couplings}
of the two probability measures in the difference, independently for each difference. 
In some contexts, if the couplings are appropriately constructed (see e.g.~\cite{giles}) then relative to Monte Carlo estimation for the most precise expectation, the computational cost can be significantly reduced. 
In the case of optimal control estimation, one cannot adopt the MLMC method in \cite{giles} directly, as exact sampling from smoothing 
distributions and their couplings is intractable. 
To circumvent this difficulty, we adapt the ideas in \cite{jasra} (see also \cite{franks}) to our context to develop a MLMC method that is based on
MCMC sampling methods. 
We then establish that, for some given $\epsilon>0$, for our proposed estimator of the optimal control to have $\mathcal{O}(\epsilon^2)$ 
mean squared error (MSE), the computational cost required is $\mathcal{O}(\epsilon^{-2}\log(\epsilon)^2)$. 
In contrast, a computational cost of $\mathcal{O}(\epsilon^{-3})$ is required for existing methodology to achieve the same MSE.
In the case where one assumes a parametric form of the control, we note that previous work in \cite{menchonkappen2018} 
have considered applying the standard MLMC method in \cite{giles} to reduce the computational cost within an iterative importance control 
scheme to update the control parameters. 

This article is structured as follows. In Section \ref{sec:problem}, we begin by detailing the stochastic optimal control problem of interest and its 
path integral formulation. We then describe our proposed methodology to compute the optimal control in Section \ref{sec:comp}, and 
state some theoretical results on its complexity in Section \ref{sec:theory}. 
In Section \ref{sec:numerics}, we validate our theory on two examples, including a nonlinear stochastic compartmental model for an epidemic with cost-controlled vaccination. The appendix features the assumptions and proofs for our complexity theorem in Section \ref{sec:theory}.

\section{Nonlinear stochastic optimal control}\label{sec:problem}
We consider a nonlinear controlled process $\{X_t, 0\leq t\leq T\}$ in $\mathbb{R}^n$, defined as the solution of the following SDE
\begin{equation}
	dX_t = f(X_t)dt + e(X_t)u(t,X_t)dt + g(X_{t})dW_{t} 
\label{montecarlosystemcontrol}
\end{equation}
with initial condition $X_{0} = x_{0} \in\bbR^n$. The above is to be understood in the Ito sense \cite{arnold1974stochastic} and $\{W_t, 0\leq t\leq T\}$ is a standard Brownian motion in $\mathbb{R}^d$. We assume that $f:\bbR^n\to\bbR^n$, $e:\bbR^n\to\bbR^{n\times m}$ and $g:\bbR^n\to\bbR^{n\times d}$ are twice differentiable and there exists a constant $c_1>0$ such that 
$$
	|f(x)-f(y)|+|g(x)-g(y)| + |e(x)-e(y)| \leq c_1|x-y|, 
$$
for all $(x,y)\in\bbR^n\times\bbR^n$. Without any loss of generality, we suppose that $e$ and $g$ (which may be non-square) have full rank. Note that 
the latter implies existence and uniqueness of left-inverses, i.e. functions $e^{-1} : \bbR^n\rightarrow \bbR^{m\times n}$ and 
$g^{-1} : \bbR^n\rightarrow \bbR^{d\times n}$ such that $(e^{-1}e)(x)=I_m$ and $(g^{-1}g)(x)=I_d$ for all $x \in\bbR^n$. 
Lastly, we assume that $g(x)g(x)^{\top}$ is uniformly positive definite over $x\in\mathbb{R}^n$.
For a fixed time interval $[t_0,t_1]\subseteq[0,T]$, the set of admissible controls $\calU_{[t_0,t_1]}$ we shall consider are 
Borel measurable functions $u:[t_0,t_1]\times\mathbb{R}^n\rightarrow\mathbb{R}^m$ satisfying 
$$\bbE^{t_0,x}_{u}\left[ \int_{t_0}^{t_1} |u(t,X_t)|^q\,dt\right] <\infty $$
for all $x\in\mathbb{R}^n$ and $q\geq 1$, where $\mathbb{E}_u^{t,x}$ denotes conditional expectations w.r.t. the law of (\ref{montecarlosystemcontrol}) on the event $X_t=x\in\mathbb{R}^n$.
These conditions are sufficient for the existence of a unique, continuous, (strong) solution to (\ref{montecarlosystemcontrol}); 
see for e.g. \cite{arnold1974stochastic, touzi2012optimal}.

For each $(t,x)\in[0,T]\times\mathbb{R}^n$ and $u\in\mathcal{U}_{[t,T]}$, we associate the process defined by 
(\ref{montecarlosystemcontrol}) with the following cost functional
\begin{equation}\label{cost_functional}
	w(t,x,u) = \bbE_u^{t,x}\left[\phi(X_{T}) + \int_{t}^{T} \left\lbrace\ell(X_{s}) + u(s,X_s)^\top Ru(s,X_s)\right\rbrace\,ds\right],
\end{equation}
where $\phi,\ell:\bbR^n\to[0,\infty)$ are continuous terminal and running cost functions, respectively, and $R\in\mathbb{R}^{m\times m}$ is a positive-definite symmetric. We then define the value function as
\begin{equation}\label{value}
	v(t,x) = \inf_{u\in\calU_{[t,T]}}w(t, x, u).
\end{equation}
If there exists a unique minimizer of (\ref{value}) across the entire time horizon $t\in[0,T]$, we will refer to it as the 
optimal control and denote it by $u^*:[0,T]\times\mathbb{R}^n\rightarrow\mathbb{R}^m$.

For any suitably smooth function $\varphi:[0,T]\times\mathbb{R}^n\rightarrow\mathbb{R}$, we will denote its 
partial derivative w.r.t. the time by $\partial_t\varphi:[0,T]\times\mathbb{R}^n\rightarrow\mathbb{R}$, its gradient w.r.t. the spatial variable by $\nabla\varphi:[0,T]\times\mathbb{R}^n\rightarrow\mathbb{R}^n$ and its Hessian by 
$\nabla^2\varphi:[0,T]\times\mathbb{R}^n\rightarrow\mathbb{R}^{n\times n}$. For any $A\in\mathbb{R}^{n\times n}$, we write its 
trace as $\mathrm{tr}(A)$.
Under appropriate conditions, the value function (\ref{value}) can be associated with the following 
Hamilton-Jacobi-Bellman (HJB) equation 
\begin{align}\label{hjb0}
	-\partial_tv(t,x) = \inf_{u\in\calU_{[0,T]}} \left\lbrace \ell(x) + u(t,x)^\top R u(t,x) + u(t,x)^\top e(x)^\top\nabla v(t,x) 
	+f(x)^\top\nabla v(t,x) + \frac{1}{2}\mathrm{tr}\left[g(x)g(x)^\top\nabla^2v(t,x)\right]\right\rbrace
\end{align}
defined for $(t,x)\in[0,T]\times\mathbb{R}^n$, with a boundary condition $v(T,\cdot) = \phi(\cdot)$ at the terminal time $T$. 
This association is made precise in the following.
\begin{ass} \label{assump1}
Suppose that the value function $v:[0,T]\times\bbR^n\rightarrow[0,\infty)$ defined in (\ref{value}) is once continuously differentiable in the time variable, twice continuously differentiable the spatial variable, and is a (classical) solution to the HJB equation (\ref{hjb0}).
\end{ass}

Sufficient conditions for this assumption to hold, in addition to the modeling hypotheses introduced thus far, can be found 
for e.g. in \cite{fleming2006controlled}. These conditions typically take the form of further regularity or boundedness assumptions on the system (\ref{montecarlosystemcontrol}) and cost functions in (\ref{cost_functional}) and/or their derivatives and are rather standard\footnote{It is noteworthy that while a classical solution to the HJB equation arising in deterministic optimal control is not typical, it is well-known \cite{krylov1972control,fleming2006controlled,krylov2008controlled,touzi2012optimal} that the stochastic optimal control problem is quite generally `more regular'.}.

Under Assumption \ref{assump1}, the value-to-go satisfies the HJB equation
\begin{align}\label{eqn:hjb1}
	-\partial_tv(t,x) = \ell(x) + u^*(t,x)^\top R u^*(t,x) + u^*(t,x)^\top e(x)^\top\nabla v(t,x) 
	+f(x)^\top\nabla v(t,x) + \frac{1}{2}\mathrm{tr}\left[g(x)g(x)^\top\nabla^2v(t,x)\right]
\end{align}
for $(t,x)\in[0,T]\times\mathbb{R}^n$, with a boundary condition $v(T,\cdot) = \phi(\cdot)$. 
From (\ref{hjb0}), we find that the corresponding optimal control is given by 
\begin{align}\label{eqn:control_form}
	u^*(t,x) = -R^{-1}e(x)^\top\nabla v(t,x) 
\end{align}
for $(t,x)\in[0,T]\times\bbR^n$. Substituting the form of the optimal control back into the HJB equation (\ref{eqn:hjb1}) gives
\begin{align*}
	-\partial_tv(t,x) = \ell(x) -\frac{1}{2}\nabla v(t,x)^\top e(x)R^{-1}e(x)^\top\nabla v(t,x)  
		+f(x)^\top\nabla v(t,x) + \frac{1}{2}\mathrm{tr}\left[g(x)g(x)^\top\nabla^2v(t,x)\right]
\end{align*}
which is a nonlinear partial differential equation defined on $[0,T]\times\bbR^n$. However, the latter can be 
simplified by considering a logarithmic transformation of the value function
\begin{align}\label{eqn:log_transform}
	\psi(t,x) = \exp\left[\frac{-v(t,x)}{\gamma}\right]
\end{align}
for $(t,x)\in[0,T]\times\bbR^n$ and some $\gamma>0$ satisfying the following assumption. 

\begin{ass} \label{assump4}
	Suppose that there exists $\gamma \in \mathbb{R}$ such that $\gamma e(x)R^{-1}e(x)^\top=g(x)g(x)^\top$ for all $x\in\mathbb{R}^d$.
\end{ass}

This assumption\footnote{The interpretation of this relationship is that along directions where the noise variance is small, the control is deemed more expensive while, conversely, in those directions in which the noise has larger variance the control is cheap \cite{kappen2005path}. Indeed, this may be desirable in practice since it forces control energy to be spent mostly in those directions in which the noise level may be problematic \cite{theodorou2010generalized}.} is standard in the path integral formulation of optimal control \cite{kappen2005path}, but it also appears more generally in the stochastic optimal control literature \cite{fleming2006controlled}. Assumption \ref{assump4} allows us to write
\begin{align*}
	-\partial_t\psi(t,x) = -\frac{1}{\gamma}\ell(x)\psi(t,x) + f(x)^\top\nabla\psi(t,x) + 
	\frac{1}{2}\mathrm{tr}\left[g(x)g(x)^\top\nabla^2\psi(t,x)\right]
\end{align*}
which is a linear partial differential equation on $[0,1]\times\mathbb{R}^n$, with boundary condition $\psi(T,\cdot) = \exp[-\phi(\cdot)/\gamma]$. By the Feynman-Kac formula, the solution is given by 
$$
	\psi(t,x) = \bbE^{t,x}\left[\exp\left\lbrace-\frac{1}{\gamma}\phi(Z_{T}) - \frac{1}{\gamma}\int_{t}^{T} \ell(Z_s)\,ds \right\rbrace \right],
$$
where $\bbE^{t,x}$ denotes conditional expectations w.r.t. the law of the uncontrolled process $\{Z_t\}$ defined by 
\begin{equation}\label{eq:diff}
	dZ_{s} = f(Z_s)ds + g(Z_s)dW_s 	
\end{equation}
with initial condition $Z_t=x\in\mathbb{R}^n$. The following result uses the relationships in (\ref{eqn:control_form}) and (\ref{eqn:log_transform}) to deduce an expression of the optimal control.

\begin{prop} \label{optimalcontrolMonteCarlo}
	Suppose Assumptions \ref{assump1} and \ref{assump4}, and the modeling hypotheses hold. We have for $(t,x)\in[0,T]\times\mathbb{R}^n$
\begin{align}
\label{control formula1}
u^*(t,x) =& -R^{-1}e(x)^\top \nabla v(t,x) 
	   = \gamma R^{-1}e(x)^\top \nabla \log\psi(t,x) \nonumber \\
	   =& \lim_{r\rightarrow 0} \frac{1}{r} \frac{\bbE^{t,x}\left[\exp\left\lbrace -\frac{1}{\gamma}\left(\phi (Z_{T}) +\int_{t}^{T} \ell (Z_s)\,ds\right) \right\rbrace  \int_0^r e^{-1}(Z_s)g(Z_s)\,dW_s \right]}
	   {\bbE^{t,x}\left[\exp\left\lbrace -\frac{1}{\gamma}\left(\phi(Z_{T}) +\int_{t}^{T} \ell (Z_s)\,ds \right)\right\rbrace \right]}
\end{align}
where expectations are path integrals defined by the uncontrolled SDE (\ref{eq:diff}) with initial condition $Z_t=x$.
\end{prop}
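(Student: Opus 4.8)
\emph{Proof idea.} The first two equalities are bookkeeping. The identity $u^*(t,x)=-R^{-1}e(x)^\top\nabla v(t,x)$ is (\ref{eqn:control_form}). Differentiating the logarithmic transformation (\ref{eqn:log_transform}), which reads $\log\psi(t,x)=-v(t,x)/\gamma$ with $\psi(t,x)>0$, gives $\nabla v(t,x)=-\gamma\,\nabla\psi(t,x)/\psi(t,x)=-\gamma\,\nabla\log\psi(t,x)$, whence $u^*(t,x)=\gamma R^{-1}e(x)^\top\nabla\log\psi(t,x)$. Moreover, left-multiplying the relation in Assumption \ref{assump4} by the left inverse $e^{-1}(x)$ (so that $e^{-1}(x)e(x)=I_m$) yields $\gamma R^{-1}e(x)^\top=e^{-1}(x)g(x)g(x)^\top$. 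Since $\psi(t,x)=\bbE^{t,x}[F(Z)]$ with $F(Z):=\exp\{-\tfrac1\gamma(\phi(Z_T)+\int_t^T\ell(Z_s)\,ds)\}>0$ does not depend on $r$, the Proposition reduces to showing
\begin{equation*}
\lim_{r\to 0}\frac1r\,\bbE^{t,x}\!\left[F(Z)\int_t^{t+r}\!e^{-1}(Z_s)g(Z_s)\,dW_s\right]=e^{-1}(x)g(x)g(x)^\top\,\nabla\psi(t,x),
\end{equation*}
the integral $\int_t^{t+r}$ being the $\int_0^r$ of (\ref{control formula1}) once the time origin is recentred at $t$.

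To prove this limit I would run a short-time version of a Bismut--Elworthy--Li / Malliavin integration by parts, carried out directly on $[t,t+r]$ so that no Jacobian flow appears. Write $A_r:=\int_t^{t+r}e^{-1}(Z_s)g(Z_s)\,dW_s$ and let $(\calF_s)$ be the filtration of the driving Brownian motion. Conditioning on $\calF_{t+r}$ and using the Markov property of (\ref{eq:diff}) together with the Feynman--Kac representation of $\psi$ (so that $\bbE^{t,x}[\exp\{-\tfrac1\gamma(\phi(Z_T)+\int_{t+r}^T\ell(Z_s)\,ds)\}\mid\calF_{t+r}]=\psi(t+r,Z_{t+r})$), one gets
\begin{equation*}
\bbE^{t,x}\!\left[F(Z)\,A_r\right]=\bbE^{t,x}\!\left[A_r\,\exp\Big\{-\tfrac1\gamma\!\int_t^{t+r}\!\ell(Z_s)\,ds\Big\}\,\psi(t+r,Z_{t+r})\right].
\end{equation*}
Under Assumption \ref{assump1}, $\psi=\exp(-v/\gamma)$ is $C^{1,2}$, so It\^o's formula gives $\psi(t+r,Z_{t+r})=\psi(t,x)+\nabla\psi(t,x)^\top g(x)\,(W_{t+r}-W_t)+\rho_r$, where $\rho_r$ collects a finite-variation part of order $r$ (the $\partial_t\psi$, drift and second-order contributions) and a martingale part of order $o(\sqrt r)$ in $L^2$ (from replacing $g(Z_s)$ by $g(x)$); similarly $\exp\{-\tfrac1\gamma\int_t^{t+r}\ell(Z_s)\,ds\}=1+O(r)$ and $A_r=e^{-1}(x)g(x)\,(W_{t+r}-W_t)+o(\sqrt r)$ in $L^2$ by continuity of $e^{-1}g$ at $x$. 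Taking expectations, the $\psi(t,x)$ term vanishes since $\bbE[A_r]=0$, every remainder cross term is $o(r)$ by Cauchy--Schwarz (each factor being $O(\sqrt r)$ in $L^2$, with at least one of order $O(r)$ or $o(\sqrt r)$), and the surviving leading term is $\bbE[e^{-1}(x)g(x)(W_{t+r}-W_t)(W_{t+r}-W_t)^\top g(x)^\top\nabla\psi(t,x)]=r\,e^{-1}(x)g(x)g(x)^\top\nabla\psi(t,x)$ by the It\^o isometry. Dividing by $r$ and letting $r\to0$ gives the displayed identity, and dividing further by $\psi(t,x)=\bbE^{t,x}[F(Z)]$ recovers (\ref{control formula1}).

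I expect the only real work to be in the last step: verifying rigorously that every cross term in the expansion other than the displayed one is genuinely $o(r)$ in $L^1$. This requires (i) short-interval $L^2$ moment bounds for $Z_s-x$ and $\sup_{t\le s\le t+r}|Z_s-x|$, which follow from the global Lipschitz (hence linear growth) hypotheses on $f$ and $g$; (ii) the estimate $\|A_r-e^{-1}(x)g(x)(W_{t+r}-W_t)\|_{L^2}=o(\sqrt r)$, using continuity of $e^{-1}$ (from full rank and differentiability of $e$) and of $g$ together with (i); and (iii) local boundedness of $\psi,\nabla\psi,\nabla^2\psi,\partial_t\psi$ near $(t,x)$, granted by the $C^{1,2}$ regularity in Assumption \ref{assump1}, to control the It\^o/Taylor remainder $\rho_r$. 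Everything else is linear algebra, the Markov property, and the Feynman--Kac formula already recorded above.
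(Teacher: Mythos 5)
Your algebraic reductions (the control form (\ref{eqn:control_form}), the identity $\nabla v=-\gamma\nabla\log\psi$ from (\ref{eqn:log_transform}), and $\gamma R^{-1}e(x)^\top=e^{-1}(x)g(x)g(x)^\top$ obtained by left-multiplying Assumption \ref{assump4} by $e^{-1}(x)$) are exactly right, and your short-time It\^o/Feynman--Kac expansion of the numerator is a correct way to obtain the limit. The paper itself gives no argument here: its ``proof'' is a one-line deferral to \cite{thijssen2014path} (stated there for $e=g$) with the remark that the generalization is straightforward. What you have written is essentially a self-contained reconstruction of that reference's argument --- condition on $\mathcal{F}_{t+r}$ to replace the tail by $\psi(t+r,Z_{t+r})$, expand by It\^o, kill the constant term via $\bbE[A_r]=0$, and extract the leading $r\,e^{-1}(x)g(x)g(x)^\top\nabla\psi(t,x)$ term by the It\^o isometry --- so it is the same route, just actually carried out. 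The one point worth being careful about, which you already flag, is that Assumption \ref{assump1} only gives $C^{1,2}$ regularity of $v$ (hence local boundedness of the derivatives of $\psi$), and $e^{-1}$ need not be globally bounded even though $e$ is Lipschitz and full rank; controlling the remainders in $L^1$ therefore needs the short-interval moment bounds and a localization/uniform-integrability step, not just pointwise continuity. That is a standard technicality rather than a gap in the idea, and it is precisely the kind of condition the cited reference also imposes.
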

\begin{proof}
	This result appears in \cite{thijssen2014path} with $e=g$ and it is straightforward to generalize.
\end{proof}
The controller form in Proposition \ref{optimalcontrolMonteCarlo} (and variations of such) is often referred to as the path integral formulation of optimal control \cite{kappen2005path}.

\section{Computation of optimal control}\label{sec:comp}
To simplify notation, throughout this section, we will set the terminal time as $T=1$ and consider estimating the optimal control at time $t=0$. 

\subsection{Standard approach}\label{sec:stand_appr}
From Proposition \ref{optimalcontrolMonteCarlo}, the objective is to compute for $r>0$ small 
\begin{align}\label{control_initial_time}
	u^*(0,x_0) = \frac{1}{r} \frac{\bbE^{0,x_0}\left[\exp\left\lbrace -\frac{1}{\gamma}\left(\phi (Z_{1}) +\int_{0}^{1} \ell (Z_s)\,ds\right) \right\rbrace  \int_0^r e^{-1}(Z_s)g(Z_s)\,dW_s \right]}
	   {\bbE^{0,x_0}\left[\exp\left\lbrace -\frac{1}{\gamma}\left(\phi(Z_{1}) +\int_{0}^{1} \ell (Z_s)\,ds \right)\right\rbrace \right]}.
\end{align}
We note that (\ref{control_initial_time}) neglects the additional bias incurred by truncating $r>0$ and refer 
the reader to \cite{bert} for a discussion on the impact of this parameter. 
To numerically approximate (\ref{control_initial_time}), the standard approach to path integral control would rely on a sufficiently 
precise time discretization of the model. For a sufficiently large $l\in\mathbb{N}$, we consider the Euler-Maruyama discretization 
of (\ref{eq:diff}) with step size $h=2^{-l}$
\begin{equation}\label{eqn:euler_maruyama}
Z_{kh} = Z_{(k-1)h} + f(Z_{(k-1)h})h + g(Z_{(k-1)h}) W_{k},\quad k\in\{1,\ldots,2^l\},
\end{equation}
with initial condition $Z_0=x_0$, where $W_k\sim\mathcal{N}_d(0,hI_d)$ denote independent Brownian increments that are distributed according to a Gaussian distribution mean zero and covariance $hI_d$. 
By taking $r=2^{-(M-1)}$ for $1<M\leq l$, this prompts the following time discretization of (\ref{control_initial_time}) 
\begin{align}\label{eqn:control_time_discretized}
	u^l(0,x_0) = \frac{1}{r} \frac{\bbE_l^{0,x_0}\left[\exp\left\lbrace -\frac{1}{\gamma}\left(\phi (Z_{1}) + h\sum_{k=0}^{2^l-1} \ell(Z_{kh})\right) \right\rbrace  \sum_{k=0}^{2^{l-M+1}-1} e^{-1}(Z_{kh})g(Z_{kh})W_{k+1} \right]}
	   {\bbE_l^{0,x_0}\left[\exp\left\lbrace -\frac{1}{\gamma}\left(\phi(Z_{1}) + h\sum_{k=0}^{2^l-1} \ell(Z_{kh}) \right)\right\rbrace \right]},
\end{align}
where $\bbE_l^{0,x_0}$ denotes conditional expectations w.r.t. the law of (\ref{eqn:euler_maruyama}) 
with initial condition $Z_0=x_0$.

To approximate the expectations in (\ref{eqn:control_time_discretized}), we can simulate $N$ trajectories $Z_{h:1}^i=(Z_{h}^i,\ldots,Z_1^i), i\in\{1,\ldots,N\}$ and consider the standard Monte Carlo approximation
\begin{align}\label{eqn:control_time_discretized_mc}
	u^{l,N}(0,x_0) = \frac{1}{r} \frac{N^{-1}\sum_{i=1}^N
	\exp\left\lbrace -\frac{1}{\gamma}\left(\phi (Z_{1}^i) + h\sum_{k=0}^{2^l-1} \ell(Z_{kh}^i)\right) \right\rbrace  \sum_{k=0}^{2^{l-M+1}-1} e^{-1}(Z_{kh}^i)g(Z_{kh}^i)W_{k+1}^i}
	   {N^{-1}\sum_{i=1}^N\exp\left\lbrace -\frac{1}{\gamma}\left(\phi(Z_{1}^i) + h\sum_{k=0}^{2^l-1} \ell(Z_{kh}^i) \right)\right\rbrace}.
\end{align}
Noting that we can write $u^{l,N}(0,x_0) = r^{-1}\sum_{i=1}^N\omega_l^i\varphi_l(Z_{0:r}^i)$ with normalized weights 
\begin{align*}
	\omega_l^i = \frac{\exp\left\lbrace -\frac{1}{\gamma}\left(\phi (Z_{1}^i) + h\sum_{k=1}^{2^l-1} \ell(Z_{kh}^i)\right) \right\rbrace}{\sum_{j=1}^N\exp\left\lbrace -\frac{1}{\gamma}\left(\phi (Z_{1}^j) + h\sum_{k=1}^{2^l-1} \ell(Z_{kh}^j)\right) \right\rbrace}
\end{align*}
and test function 
\begin{align*}
	\varphi_l(z_{0:r})=\sum_{k=0}^{2^{l-M+1}-1} e^{-1}(z_{kh})g(z_{kh})g^{-1}(z_{kh})
	\left\lbrace z_{(k+1)h} - z_{kh} - f(z_{kh})h\right\rbrace,
\end{align*}
it follows that (\ref{eqn:control_time_discretized_mc}) can be seen as the normalized importance sampling estimator of 
$u^l(0,x_0)=r^{-1}\mathbb{E}_{\pi^{l}}\left[\varphi_l(Z_{0:r})\right]$, where $\mathbb{E}_{\pi^{l}}$ denotes expectation 
w.r.t. the distribution
\begin{align}\label{eqn:ssm}
	\pi^l(dz_{h:1}) = G^{l}(z_{h:1}) p^{l}(dz_{h:1})
	\bigg/C^l
\end{align}
defined on $\mathbb{R}^{n2^l}$, equipped with the Borel $\sigma$-algebra $\mathcal{B}(\mathbb{R}^{n2^l})$. The notation 
$p^{l}(dz_{h:1})$ denotes the law of (\ref{eqn:euler_maruyama}) with initial condition $x_0$, 
\begin{align}\label{eqn:potentials}
	G^{l}(z_{h:1})=\prod_{k=1}^{2^l}G_k^l(z_{kh}), \quad 
	G_k^l(z_{kh}) = 
	\begin{cases}
		\exp\left\lbrace-\frac{h}{\gamma}\ell(z_{kh})\right\rbrace  &\mbox{for }k\in\{1,\ldots,2^l-1\}, \\
		\exp\left\lbrace-\frac{1}{\gamma}\phi(z_1)\right\rbrace &\mbox{for }k = 2^l,
	\end{cases}	
\end{align}
and $C^l<\infty$ is the normalizing constant of (\ref{eqn:ssm}).

Although the estimator (\ref{eqn:control_time_discretized_mc}) is straightforward to implement and amenable to 
parallel computation, its variance will often grow exponentially with the time horizon $T$ (taken as $1$ in our notation), 
and particularly so when $l\in\mathbb{N}$ is not sufficiently large \cite{jasra1}. The following section presents an alternative 
Monte Carlo approach that circumvents this difficulty. 

\subsection{Smoothing approach}\label{sec:smoothing}
Our proposed methodology is based on the observation that (\ref{eqn:ssm}) can be seen as the 
smoothing distribution of a state space model with (\ref{eqn:euler_maruyama}) as the latent Markov process 
and (\ref{eqn:potentials})
as the observation densities. This connection between optimal control and smoothing 
has been previously noted in \cite{flemingmitter1982,kappengomezopper}. This perspective explains why the variance 
of (\ref{eqn:control_time_discretized_mc}) is often large, and allows us to proposed better optimal control estimators by 
exploiting state-of the-art smoothing algorithms based on sequential Monte Carlo (SMC) methods. 

A basic SMC 
method known as the bootstrap particle filter is detailed in Algorithm \ref{alg:SMC}, where $\mathcal{R}(\omega_1,\ldots,\omega_N)$ refers to a resampling operation based on a vector of nonnegative unnormalized weights 
$\omega_i,i\in\{1,\ldots,N\}$. For example, this is the categorical distribution on $\{1,\ldots,N\}$ with probabilities 
$\omega_i/\sum_{j=1}^N\omega_j, i\in\{1,\ldots,N\}$, when multinomial resampling is employed; other
lower variance and adaptive resampling schemes can also be considered.
The algorithm requires specifying the number of particles $N_p\in\mathbb{N}$ as input, which determines the accuracy and cost of the approximation, and outputs an approximate sample from (\ref{eqn:ssm}) and an unbiased estimator of its normalizing constant. 

\begin{algorithm}
\caption{Sequential Monte Carlo for model (\ref{eqn:ssm})\label{alg:SMC}}
\textbf{Input: }number of particles $N_p\in\mathbb{N}$.
\begin{enumerate}
\item At time $0$ and particle $i\in\{1,\ldots, N_p\}$:
\begin{enumerate}
\item set $Z_{0}^{i}=x_0$;
\item set ancestor index $A_{0}^{i}=i$.
\end{enumerate}
\item For time step $k\in\{1,\ldots,2^{l}-1\}$ and particle $i\in\{1,\ldots,N_p\}$:
\begin{enumerate}
\item sample Brownian increment $W_k^i\sim\mathcal{N}_d(0,hI_d)$; 
\item set $Z_{kh}^{i} = Z_{(k-1)h}^{A_{k-1}^{i}} + f(Z_{(k-1)h}^{A_{k-1}^{i}})h + g(Z_{(k-1)h}^{A_{k-1}^{i}})W_k^i$;
\item sample ancestor index $A_{k}^{i}\sim\mathcal{R}
\Big(G_k^l(Z_{kh}^1),\ldots,G_k^l(Z_{kh}^{N_p})\Big)$.
\end{enumerate}
\item For time step $2^{l}$:
\begin{enumerate}
\item sample Brownian increment $W_{2^l}^{i}\sim\mathcal{N}_d(0,hI_d)$ for particle $i\in\{1,\ldots,N_p\}$; 
\item set $Z_{1}^{i} = Z_{(2^l-1)h}^{A_{2^l-1}^{i}} + f(Z_{(2^l-1)h}^{A_{2^l-1}^{i}})h + g(Z_{(2^l-1)h}^{A_{2^l-1}^{i}})W_{2^l}^i$;
\item sample an ancestor index $B_{2^l}\sim\mathcal{R}\Big(G_{2^l}^l(Z_{1}^1),\ldots,G_{2^l}^l(Z_{1}^{N_p})\Big)$.
\end{enumerate}
\item Trace ancestry by setting $B_{k} = A_k^{B_{k+1}}$ for $k\in\{1,\ldots,2^{l}-1\}$.
\end{enumerate}
\textbf{Output: }trajectory $Z_{h:1} = (Z_{h}^{B_1},\ldots,Z_{1}^{B_{2^l}})$ and normalizing constant estimator 
$C^{l,N_p}=\prod_{k=1}^{2^l}N_p^{-1}\sum_{i=1}^{N_p}G_k^l(Z_{kh}^i)$.
\end{algorithm}

We now consider a particular smoothing algorithm known as the particle independent Metropolis-Hastings (PIMH) method, which uses Algorithm \ref{alg:SMC} as a building block to design a MCMC method to sample from (\ref{eqn:ssm}); see \cite{andrieu} for additional details and other alternatives. An algorithmic description of PIMH is given in Algorithm \ref{alg:PIMH}. From the Markov chain $Z_{h:1}^i,i\in\{1,\ldots,N_l\}$ generated by PIMH, one obtains a consistent estimator of the optimal control for the time discretized model (\ref{eqn:control_time_discretized}) 
\begin{align}\label{eqn:single_level_estimator}
	u^{l,N_l}(0,x_0) = r^{-1}N_l^{-1}\sum_{i=1}^{N_{l}}\varphi_l(Z_{0:r}^i)
\end{align}
as the number of iterations $N_l\rightarrow\infty$, for any number of particles $N_p\in\mathbb{N}$. The latter also impacts the quality of the approximation: since the normalizing constant estimator given by Algorithm \ref{alg:SMC} is consistent as $N_p\rightarrow\infty$, the acceptance probability in Step 2(b) would be close to one if $N_p$ is large.

\begin{algorithm}
\caption{Particle independent Metropolis-Hastings for model (\ref{eqn:ssm})\label{alg:PIMH}}
\textbf{Input: }number of particles $N_p\in\mathbb{N}$ and iterations $N_{l}\in\mathbb{N}$.
\begin{enumerate}
\item Initialization:
\begin{enumerate}
\item run Algorithm \ref{alg:SMC} to obtain trajectory $Z_{h:1}^0$ and normalizing constant estimator $C^{l,N_p}_0$.
\end{enumerate}
\item For iteration $i\in\{1,\ldots,N_l\}$:
\begin{enumerate}
\item run Algorithm \ref{alg:SMC} to obtain trajectory $Z_{h:1}^*$ and normalizing constant estimator $C^{l,N_p}_*$;
\item with probability $\min\left\lbrace1,C^{l,N_p}_*/C^{l,N_p}_{i-1}\right\rbrace$ set $Z_{h:1}^i=Z_{h:1}^*$ and 
$C^{l,N_p}_i=C^{l,N_p}_*$; 
\item otherwise set $Z_{h:1}^i=Z_{h:1}^{i-1}$ and $C^{l,N_p}_i=C^{l,N_p}_{i-1}$.
\end{enumerate}

\end{enumerate}

\textbf{Output: }trajectories $Z_{h:1}^i,i\in\{1,\ldots,N_l\}$.
\end{algorithm}

\subsection{Multilevel approach}\label{sec:MLMC}
To further improve the computational efficiency of (\ref{eqn:single_level_estimator}), we now leverage upon the 
MLMC method \cite{giles,giles1,hein} by considering a hierarchy of time discretizations with time steps $h_l=2^{-l},l\in\{M,\ldots,L\}$ for $M<L$. The multilevel approach is based on the following telescopic sum 
\begin{align}
	\mathbb{E}_{\pi^L}\left[\varphi_L(Z_{0:r})\right] = \mathbb{E}_{\pi^M}\left[\varphi_M(Z_{0:r})\right] 
	+ \sum_{l=M+1}^{L}\bigg\{ \mathbb{E}_{\pi^l}\left[\varphi_l(Z_{0:r})\right] - 
	\mathbb{E}_{\pi^{l-1}}\left[\varphi_{l-1}(Z_{0:r})\right]\bigg\},
\end{align}
where $\pi^l$ refers to the distribution defined in (\ref{eqn:ssm}) with time step $h=h_l,l\in\{M,\ldots,L\}$. 
The first term in the sum $\mathbb{E}_{\pi^M}\left[\varphi_M(Z_{0:r})\right]$ can be approximated using 
the methodology described in Section \ref{sec:smoothing}, i.e. we employ Algorithm \ref{alg:PIMH} with $N_M\in\mathbb{N}$ iterations to obtain a Markov chain $Z_{h_M:1}^i,i\in\{1,\ldots,N_M\}$, 
and return the estimator 
\begin{align}\label{eqn:initial_level_estimator}
	u^{M,N_M}(0,x_0) = r^{-1}N_M^{-1}\sum_{i=1}^{N_{M}}\varphi_M(Z_{0:r}^i).
\end{align}
In standard MLMC \cite{giles}, the summands 
\begin{align}\label{eqn:summands}
	\mathbb{E}_{\pi^l}\left[\varphi_l(Z_{0:r})\right] - 
	\mathbb{E}_{\pi^{l-1}}\left[\varphi_{l-1}(Z_{0:r})\right]
\end{align}
are then estimated independently, for $l\in\{M+1,\ldots,L\}$, by sampling from appropriately constructed couplings of $\pi_l$ and $\pi_{l-1}$ that induce sufficient correlation to reduce the computational cost of just approximating $\mathbb{E}_{\pi^L}\left[\varphi_L(Z_{0:r})\right]$ in isolation. However, in our context, exact sampling from these smoothing distributions and their couplings is not feasible. In the following, we will adapt the ideas in \cite{jasra} to our setup to approximate (\ref{eqn:summands}). 

We now construct a smoothing distribution 
\begin{align}\label{eqn:ssm_extended}
\pi^{l,l-1}(dz_{h_l:1}(l),dz_{h_{l-1}:1}(l-1)) = \check{G}^{l,l-1}(z_{h_l:1}(l),z_{h_{l-1}:1}(l-1))
\,\,p^{l,l-1}(dz_{h_l:1}(l),dz_{h_{l-1}:1}(l-1))\bigg/C^{l,l-1}
\end{align}
defined on the product space $\mathbb{R}^{n2^l}\times \mathbb{R}^{n2^{l-1}}$, 
equipped with the product $\sigma$-algebra $\mathcal{B}(\mathbb{R}^{n2^l})\times\mathcal{B}(\mathbb{R}^{n2^{l-1}})$, 
that would allow us to couple our approximation of the smoothing distributions at time discretization levels $l$ and $l-1$. This corresponds to a new state space model: $p^{l,l-1}$ denotes the law of a latent process $(Z_{h_l:1}(l), Z_{h_{l-1}:1}(l-1))$ that evolves according to 
\begin{align}\label{eqn:coupled_euler_maruyama}
Z_{kh_l}(l) &= Z_{(k-1)h_l}(l) + f(Z_{(k-1)h_l}(l))h_l + g(Z_{(k-1)h_l}(l)) W_{k}(l),\quad k\in\{1,\ldots,2^l\},\\
Z_{kh_{l-1}}(l-1) &= Z_{(k-1)h_{l-1}}(l-1) + f(Z_{(k-1)h_{l-1}}(l-1))h_{l-1} + g(Z_{(k-1)h_{l-1}}(l-1)) W_{k}(l-1),\quad k\in\{1,\ldots,2^{l-1}\},\notag
\end{align}
with initial condition $Z_0(l)=Z_0(l-1)=x_0$, 
\begin{align*}
\check{G}^{l,l-1}(z_{h_l:1}(l),z_{h_{l-1}:1}(l-1)) = \prod_{k\in K_l^1}\check{G}_k^l(z_{kh_l}(l))
\prod_{k\in K_l^2}
\check{G}_k^l(z_{kh_l}(l),z_{kh_{l-1}/2}(l-1))
\end{align*}
where the observation densities $(\check{G}_1^l,\ldots,\check{G}_{2^l}^l)$ given by 
\begin{align}\label{eqn:check_obs}
\check{G}_k^l(z_{kh_l}(l)) &=  G_k^l(z_{kh_l}(l)) + 1,\quad k\in K_l^1=\{1,3,\ldots,2^l-1\}, \\
\check{G}_k^l(z_{kh_l}(l),z_{kh_{l-1}/2}(l-1)) &= \max\left\lbrace G_k^l(z_{kh_l}(l)), G_{k/2}^{l-1}(z_{kh_{l-1}/2}(l-1))
\right\rbrace,\quad k\in K_l^2=\{2,4,\ldots,2^l\},\notag
\end{align}
and $C^{l,l-1}<\infty$ is the normalization constant.
In (\ref{eqn:coupled_euler_maruyama}), the Brownian increments $(W_1(l),\ldots,W_{2^l}(l))$ and $(W_1(l-1),\ldots,W_{2^{l-1}}(l-1))$ at levels $l$ and $l-1$, respectively, are coupled by independently sampling $W_k(l)\sim\mathcal{N}_d(0,h_lI_d)$ for $k\in\{1,\ldots,2^l\}$ and setting $W_k(l-1)= W_{2(k-1)+1}(l)+W_{2k}(l)\sim\mathcal{N}_d(0,h_{l-1}I_d)$ for $k\in\{1,\ldots,2^{l-1}\}$. 
To approximate (\ref{eqn:summands}), we will rely on the identity 
\begin{align}\label{eqn:diff_levels}
\mathbb{E}_{\pi^l}\left[\varphi_l(Z_{0:r})\right] - 
\mathbb{E}_{\pi^{l-1}}\left[\varphi_{l-1}(Z_{0:r})\right] =
\end{align}
$$
\frac{\mathbb{E}_{\pi^{l,l-1}}\Big[\varphi_l(Z_{0:r}(l))
\check{H}^{l,1}(Z_{h_l:1}(l),Z_{h_{l-1}:1}(l-1))\Big]}
{\mathbb{E}_{\pi^{l,l-1}}\Big[
\check{H}^{l,1}(Z_{h_l:1}(l),Z_{h_{l-1}:1}(l-1))\Big]} - 
\frac{\mathbb{E}_{\pi^{l,l-1}}\Big[\varphi_{l-1}(Z_{0:r}(l-1))
\check{H}^{l,2}(Z_{h_l:1}(l),Z_{h_{l-1}:1}(l-1))\Big]}
{\mathbb{E}_{\pi^{l,l-1}}\Big[
\check{H}^{l,2}(Z_{h_l:1}(l),Z_{h_{l-1}:1}(l-1))\Big]} 
$$
where $\mathbb{E}_{\pi^{l,l-1}}$ denotes expectation w.r.t. $\pi^{l,l-1}$ and 
\begin{align}\label{eqn:RNs}
	\check{H}^{l,1}(z_{h_l:1}(l),z_{h_{l-1}:1}(l-1)) &= \frac{G^{l}(z_{h_l:1}(l))}{\check{G}^{l,l-1}(z_{h_l:1}(l),z_{h_{l-1}:1}(l-1))}, \\
	\check{H}^{l,2}(z_{h_l:1}(l),z_{h_{l-1}:1}(l-1)) &= \frac{G^{l-1}(z_{h_{l-1}:1}(l-1))}{\check{G}^{l,l-1}(z_{h_l:1}(l),z_{h_{l-1}:1}(l-1))}.
\end{align}
The choice of observation densities in (\ref{eqn:check_obs}) is such that the Radon-Nikodym derivatives 
(\ref{eqn:RNs}) are upper-bounded by a finite constant that does not depend on the level $l$. 
This ensures that the change of measure approach in (\ref{eqn:diff_levels}) would not introduce too much 
variance relative to exact sampling from a dependent coupling of $\pi_l$ and $\pi_{l-1}$. 

Since (\ref{eqn:ssm_extended}) is just another smoothing distribution on an extended state space, 
we can also employ the methodology of Section \ref{sec:smoothing} to construct a MCMC method to sample from it. 
The resulting algorithm based on the PIMH method is detailed in Algorithm \ref{alg:PIMH_extended}. 
Like before, this uses a SMC method on model (\ref{eqn:ssm_extended}), described in Algorithm \ref{alg:SMC_extended}, as a building block. Given the Markov chain $Z_{h_l:1}^{i}(l), Z_{h_{l-1}:1}^{i}(l-1), i\in\{1,\ldots,N_p\}$ generated by Algorithm \ref{alg:PIMH_extended}, we approximate (\ref{eqn:diff_levels}) using the estimator 
$$
r\bigg\{u^{l,N_{l}}(0,x_0) - u^{l-1,N_{l}}(0,x_0)\bigg\}
 = 
$$
$$
\frac{N_l^{-1}\sum_{i=1}^{N_l}\varphi_l(Z_{0:r}^i(l))\check{H}^{l,1}(Z_{h_l:1}^i(l),Z_{h_{l-1}:1}^i(l-1))}
{N_l^{-1}\sum_{i=1}^{N_l}\check{H}^{l,1}(Z_{h_l:1}^i(l),Z_{h_{l-1}:1}^i(l-1))} 
- 
\frac{N_l^{-1}\sum_{i=1}^{N_l}\varphi_{l-1}(Z_{0:r}^i(l-1))
\check{H}^{l,2}(Z_{h_l:1}^i(l),Z_{h_{l-1}:1}^i(l-1))}
{N_l^{-1}\sum_{i=1}^{N_l}\check{H}^{l,2}(Z_{h_l:1}^i(l),Z_{h_{l-1}:1}^i(l-1))}. 
$$
Using the above approach independently for levels $l\in\{M+1,\ldots,L\}$ and (\ref{eqn:initial_level_estimator}) 
for level $l=M$ gives the following multilevel estimator of the optimal control (\ref{control_initial_time}) 
\begin{align}\label{eqn:ML_estimator}
u^{M:L,N_{M:L}}(0,x_0) = u^{M,N_M}(0,x_0) + \sum_{l=M+1}^L \bigg\{u^{l,N_{l}}(0,x_0) - u^{l-1,N_{l}}(0,x_0)\bigg\}.
\end{align}
It follows under mild assumptions that (\ref{eqn:ML_estimator}) is a consistent estimator. In the next section, we will 
establish, under appropriate assumptions, that the multilevel estimator has a reduced computational cost 
compared to the single level estimator (\ref{eqn:single_level_estimator}). 

\begin{algorithm}
\caption{Sequential Monte Carlo for model (\ref{eqn:ssm_extended}) at level $l\in\{M+1,\ldots,L\}$\label{alg:SMC_extended}}
\textbf{Input: }number of particles $N_p\in\mathbb{N}$.
\begin{enumerate}
\item At time $0$ and particle $i\in\{1,\ldots, N_p\}$:
\begin{enumerate}
\item set $Z_{0}^{i}(l)=x_0$ and $Z_{0}^{i}(l-1)=x_0$;
\item set ancestor index $A_{0}^{i}=i$.
\end{enumerate}
\item For time step $k\in\{1,\ldots,2^{l}-1\}$ and particle $i\in\{1,\ldots,N_p\}$:
\begin{enumerate}
\item sample Brownian increment $W_k^i(l)\sim\mathcal{N}_d(0,h_lI_d)$ for level $l$; 
\item set $Z_{kh_l}^{i}(l) = Z_{(k-1)h_l}^{A_{k-1}^{i}}(l) + f(Z_{(k-1)h_l}^{A_{k-1}^{i}}(l))h_l + g(Z_{(k-1)h_l}^{A_{k-1}^{i}}(l))W_k^i(l)$;
\item if $k\in\{2,4,\ldots,2^{l}-2\}$:
\begin{enumerate}
	\item set $n=k/2$;
	\item set Brownian increment $W_n^i(l-1)= W_{2(n-1)+1}^i(l)+W_{2n}^i(l)$ for level $l-1$; 	
	\item set $Z_{nh_{l-1}}^i(l-1) = Z_{(n-1)h_{l-1}}^{A_{2(n-1)}^i}(l-1) + f(Z_{(n-1)h_{l-1}}^{A_{2(n-1)}^i}(l-1))h_{l-1} 
	+ g(Z_{(n-1)h_{l-1}}^{A_{2(n-1)}^i}(l-1)) W_{n}^i(l-1)$;
	\item sample ancestor index $A_{k}^{i}\sim\mathcal{R}
\Big(\check{G}_k^l(Z_{kh_l}^1(l),Z_{nh_{l-1}}^1(l-1)),\ldots, \check{G}_k^l(Z_{kh_l}^{N_p}(l),Z_{nh_{l-1}}^{N_p}(l-1))\Big)$.
\end{enumerate}
\end{enumerate}
\item For time step $2^{l}$:
\begin{enumerate}
\item sample Brownian increment $W_{2^l}^i(l)\sim\mathcal{N}_d(0,hI_d)$ for level $l$ and particle $i\in\{1,\ldots,N_p\}$; 
\item set $Z_{1}^{i}(l) = Z_{(2^l-1)h_l}^{A_{2^l-1}^{i}}(l) + f(Z_{(2^l-1)h_l}^{A_{2^l-1}^{i}}(l))h_l + g(Z_{(2^l-1)h_l}^{A_{2^l-1}^{i}}(l))W_{2^l}^i(l)$;\item set Brownian increment $W_{2^{l-1}}^i(l-1)= W_{2^{l}-1}^i(l)+W_{2^l}^i(l)$ for level $l-1$ and particle $i\in\{1,\ldots,N_p\}$; 
\item set $Z_{1}^i(l-1) = Z_{(2^{l-1}-1)h_{l-1}}^{A_{2^{l}-2}^i}(l-1) + f(Z_{(2^{l-1}-1)h_{l-1}}^{A_{2^{l}-2}^i}(l-1))h_{l-1} 
	+ g(Z_{(2^{l-1}-1)h_{l-1}}^{A_{2^{l}-2}^i}(l-1)) W_{2^{l-1}}^i(l-1)$;
\item sample an ancestor index $B_{2^l}\sim\mathcal{R}\Big(\check{G}_{2^l}^l(Z_{1}^1(l),Z_{1}^1(l-1)),\ldots, \check{G}_{2^l}^l(Z_{1}^{N_p}(l),Z_{1}^{N_p}(l-1))\Big)$.
\end{enumerate}
\item Trace ancestry by setting $B_{2k} = A_{2k}^{B_{2(k+1)}}$ for $k\in\{1,\ldots,2^{l-1}-1\}$ and $B_{2k-1}=B_{2k}$ for $k\in\{1,\ldots,2^{l-1}\}$.
\end{enumerate}
\textbf{Output: }trajectories $Z_{h_l:1}(l) = (Z_{h_l}^{B_1},\ldots,Z_{1}^{B_{2^l}})$ and $Z_{h_{l-1}:1}(l-1)=(Z_{h_{l-1}}^{B_2},\ldots,Z_{1}^{B_{2^{l}}})$,
and normalizing constant estimator 
$C^{l,l-1,N_p}=\prod_{k\in K_l^1}N_p^{-1}\sum_{i=1}^{N_p}\check{G}_k^l(Z_{kh_l}^i(l))
\prod_{k\in K_l^2}N_p^{-1}\sum_{i=1}^{N_p}
\check{G}_k^l(Z_{kh_l}^i(l),Z_{kh_{l-1}/2}^i(l-1))$.
\end{algorithm}

\begin{algorithm}
\caption{Particle independent Metropolis-Hastings for model (\ref{eqn:ssm_extended})\label{alg:PIMH_extended}}
\textbf{Input: }number of particles $N_p\in\mathbb{N}$ and iterations $N_{l}\in\mathbb{N}$.
\begin{enumerate}
\item Initialization:
\begin{enumerate}
\item run Algorithm \ref{alg:SMC_extended} to obtain trajectories $Z_{h_l:1}^0(l)$ and $Z_{h_{l-1}:1}^0(l-1)$, 
and normalizing constant estimator $C^{l,l-1,N_p}_0$.
\end{enumerate}
\item For iteration $i\in\{1,\ldots,N_l\}$:
\begin{enumerate}
\item run Algorithm \ref{alg:SMC_extended} to obtain trajectories $Z_{h_l:1}^*(l)$ and $Z_{h_{l-1}:1}^*(l-1)$, 
and normalizing constant estimator $C^{l,l-1,N_p}_*$;
\item with probability $\min\left\lbrace1,C^{l,l-1,N_p}_*/C^{l,l-1,N_p}_{i-1}\right\rbrace$ set $Z_{h_l:1}^{i}(l) = Z_{h_l:1}^*(l)$, 
$Z_{h_{l-1}:1}^{i}(l-1)= Z_{h_{l-1}:1}^*(l-1)$ and $C^{l,l-1,N_p}_i=C^{l,l-1,N_p}_*$; 
\item otherwise set 
$Z_{h_l:1}^{i}(l) = Z_{h_l:1}^{i-1}(l)$, 
$Z_{h_{l-1}:1}^{i}(l-1) = Z_{h_{l-1}:1}^{i-1}(l-1)$ and $C^{l,l-1,N_p}_i=C^{l,l-1,N_p}_{i-1}$.
\end{enumerate}

\end{enumerate}

\textbf{Output: }trajectories $Z_{h_l:1}^{i}(l), Z_{h_{l-1}:1}^{i}(l-1), i\in\{1,\ldots,N_l\}$.
\end{algorithm}

\section{Theoretical results}\label{sec:theory}
In our context, to sample $\pi^{l,l-1}$ for $l\in\{M+1,\dots,L\}$ (resp.~$\pi^M$) we will generate a Markov chain on a potentially enlarged space $\mathsf{W}_l \supseteq\mathbb{R}^{n(2^l+2^{l-1})}$ with the $\sigma$-algebra $\mathscr{W}_l$
(resp.~$(\mathsf{W}_M,\mathscr{W}_M)$). 
The purpose of this construction is to facilitate the application of advanced Markov chain samplers such as in \cite{andrieu}.
We denote the associated Markov kernel as $K_l:\mathcal{W}_l\times\mathscr{W}_l\rightarrow[0,1]$, $l\in\{M,\dots,L\}$.

We will consider studying (component-wise)
\begin{equation}\label{eq:mse_def}
\mathbb{E}\Big[\Big(\Big\{u^{M:L,N_{M:L}}(0,x_0) - u^{L}(0,x_0)\Big\}
+ u^{L}(0,x_0) - u^*(0,x_0)\Big)^2\Big]
\end{equation}
where $\mathbb{E}$ denotes an expectation w.r.t.~the Markov chains that have been simulated to estimate $u^{L}(0,x_0)$. 
Our objective is to verify that the MSE \eqref{eq:mse_def} can be made, for $\epsilon>0$ given, of $\mathcal{O}(\epsilon^2)$,
with a cost that is smaller than using a single MCMC algorithm to approximate $u^{L}(0,x_0)$.

The assumptions for the following result are in Appendix \ref{app:ass_state} and the proof is in Appendix \ref{app:main_thm}.
\begin{theorem}\label{theo:main_thm}
Assume (A\ref{ass:1}-\ref{ass:2}). Then there exists a $C<+\infty$ such that for any $L> M>1$, $N_{M:L}\geq 1$:
\begin{equation}\label{eq:main_thm_ineq}
\mathbb{E}\Big[\Big(\Big\{u^{M:L,N_{M:L}}(0,x_0) - u^{L}(0,x_0)\Big\}
+ u^{L}(0,x_0) - u^*(0,x_0)\Big)^2\Big]
\leq
C\Big(\sum_{l=M}^L\frac{h_l}{N_l} + \sum_{l=M+1}^{L}\sum_{q=M+1}^L\mathbb{I}_{A}(l,q)\frac{h_l^{1/2}h_q^{1/2}}{N_lN_q} + h_L\Big)
\end{equation}
where $A=\{(l,q)\in\{M+1,\dots,L\}^2:l\neq q\}$, $\mathbb{I}$ is the indicator function.
\end{theorem}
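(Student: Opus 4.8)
The plan is to bound the MSE \eqref{eq:mse_def} by splitting it, via the inequality $(a+b)^2\leq 2a^2+2b^2$, into a \emph{discretization bias} term $\big(u^L(0,x_0)-u^*(0,x_0)\big)^2$ and a \emph{Monte Carlo / MCMC} term $\mathbb{E}\big[\big(u^{M:L,N_{M:L}}(0,x_0)-u^L(0,x_0)\big)^2\big]$. The bias term should be $\mathcal{O}(h_L)$: since $u^L$ is built from $\varphi_L$ and $\pi^L$, which are the Euler--Maruyama discretizations (at step $h_L=2^{-L}$) of the path integrals in Proposition \ref{optimalcontrolMonteCarlo}, a standard weak-error analysis for Euler--Maruyama (strong order $1/2$, weak order $1$, using the Lipschitz and regularity hypotheses on $f,e,g,\phi,\ell$ stated in Section \ref{sec:problem}) together with a ratio-of-expectations perturbation argument gives $|u^L(0,x_0)-u^*(0,x_0)|\leq C h_L$; this is where assumption (A\ref{ass:2}) on moments/uniform integrability is invoked. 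I would isolate this as a lemma in the appendix.

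For the Monte Carlo term, I would use the telescoping decomposition \eqref{eqn:ML_estimator}: writing $Y_M = u^{M,N_M}(0,x_0)-\mathbb{E}_{\pi^M}[\cdot]$ and $Y_l = \{u^{l,N_l}(0,x_0)-u^{l-1,N_l}(0,x_0)\} - \{u^l(0,x_0)-u^{l-1}(0,x_0)\}$ for $l>M$, the error $u^{M:L,N_{M:L}}(0,x_0)-u^L(0,x_0)$ equals $\sum_{l=M}^L Y_l$. Expanding the square gives $\sum_l \mathbb{E}[Y_l^2] + \sum_{l\neq q}\mathbb{E}[Y_lY_q]$. The increments are generated by \emph{independent} runs of Algorithm \ref{alg:PIMH}/\ref{alg:PIMH_extended} across levels, so $\mathbb{E}[Y_lY_q]$ for $l\neq q$ factors as $\mathbb{E}[Y_l]\mathbb{E}[Y_q]$ — but PIMH is a biased-at-finite-$N_l$ MCMC scheme, so $\mathbb{E}[Y_l]\neq 0$; its bias decays like $1/N_l$, which is precisely the source of the cross term $h_l^{1/2}h_q^{1/2}/(N_lN_q)$ with the indicator restricting to $l,q>M$ (the $l=M$ term has no coupling and its bias is absorbed elsewhere). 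For the diagonal terms: $\mathbb{E}[Y_M^2]$ is the mean-squared error of a PIMH ergodic average, which is $\mathcal{O}(1/N_M)$, and I would claim the constant carries an extra $h_M$ factor coming from the variance of $\varphi_M(Z_{0:r})$ under $\pi^M$ — $\varphi_M$ is a sum over $\mathcal{O}(2^{-M}/r)$ increments each of size $\mathcal{O}(h_M^{1/2})$ via the $g^{-1}\{z_{(k+1)h}-z_{kh}-f h\}$ structure, giving $\mathrm{Var}(\varphi_M)=\mathcal{O}(h_M)$ after the $r^{-1}$ scaling cancels appropriately; hence $\mathbb{E}[Y_M^2]\leq C h_M/N_M$. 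For $l>M$, $\mathbb{E}[Y_l^2]$ is the MSE of a PIMH average of the coupled difference estimator on $\pi^{l,l-1}$; using that the Radon--Nikodym factors $\check H^{l,1},\check H^{l,2}$ in \eqref{eqn:RNs} are uniformly bounded (by the remark following \eqref{eqn:RNs}, this is the role of the $\max$ and $+1$ in \eqref{eqn:check_obs}) plus the standard strong-error bound $\mathbb{E}|Z_{kh_l}(l)-Z_{kh_{l-1}}(l-1)|^2=\mathcal{O}(h_l)$ for the synchronously-coupled Euler scheme \eqref{eqn:coupled_euler_maruyama}, one gets $\mathrm{Var}$ of the coupled difference $=\mathcal{O}(h_l)$ and therefore $\mathbb{E}[Y_l^2]\leq C h_l/N_l$. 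Summing gives the stated bound.

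The technical engine for all the PIMH pieces is a quantitative bound on the bias and MSE of PIMH ergodic averages that is \emph{uniform in the level} $l$. Here I would invoke the known geometric ergodicity of PIMH when the normalizing-constant estimator has bounded relative variance — Algorithm \ref{alg:SMC}'s estimator $C^{l,N_p}$ is bounded because the potentials $G_k^l$ are bounded (as $\phi,\ell\geq 0$, so $G_k^l\leq 1$), and similarly $\check G_k^l$ are bounded above and below away from zero, uniformly in $l$ — to get $\mathbb{E}[(\pi^{N_l}(\varphi)-\pi(\varphi))^2]\leq C\|\varphi\|^2/N_l$ and $|\mathbb{E}[\pi^{N_l}(\varphi)]-\pi(\varphi)|\leq C\|\varphi\|/N_l$ with $C$ independent of $l$. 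This uniformity, and in particular showing the relevant test functions ($\varphi_l$, $\check H^{l,j}$, and their products) have norms controlled independently of $l$ in the right sense so that the $h_l$-gains survive, is the main obstacle; it is exactly the step that makes assumptions (A\ref{ass:1}--\ref{ass:2}) necessary, and I expect the bulk of the appendix proof to be spent verifying the hypotheses of a PIMH ergodicity theorem in this level-uniform way and then assembling the bias/variance estimates into the three terms on the right-hand side of \eqref{eq:main_thm_ineq}.
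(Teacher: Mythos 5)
Your proposal follows essentially the same route as the paper: a $C_2$-inequality split into squared discretization bias plus Monte Carlo error, then a telescoping expansion of the latter whose diagonal terms are controlled by a level-uniform PIMH mean-squared-error bound of order $h_l/N_l$ (the paper's Theorem \ref{theo:1}, resting on the uniformly bounded potentials and Radon--Nikodym factors of Lemma \ref{lem:a1} together with the strong error of the synchronously coupled Euler scheme) and whose cross terms are controlled, via independence of the chains across levels, by the PIMH increment bias of order $h_l^{1/2}/N_l$ (Proposition \ref{prop:main_prop}). The one substantive deviation is your claimed weak-order-one bias $|u^{L}(0,x_0)-u^*(0,x_0)|=\mathcal{O}(h_L)$: the paper establishes only the strong-order rate $\mathcal{O}(h_L^{1/2})$ in \eqref{eq:bias}, which is all that is needed since its square already yields the $h_L$ term, so your stronger (and unjustified, given the stochastic-integral ratio structure of the estimand) claim is harmless.
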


To understand the significance of this result, suppose that one iteration of each Markov chain costs $\mathcal{O}(h_l^{-1})$.
The latter is the cost of sampling Euler approximations (note Remark \ref{rem:2} in the appendix implies that the $\mathcal{O}(h_l^{-1})$ cost per iteration is possible).
Let $\epsilon>0$ be given and set $L=\mathcal{O}(|\log(\epsilon)|)$, so that the square bias is $\mathcal{O}(\epsilon^2)$ (see \eqref{eq:bias} in the appendix, but this is the $h_L$ term
on the R.H.S.~of \eqref{eq:main_thm_ineq}).
Now choosing $N_l=\mathcal{O}(\epsilon^{-2}h_{l} L)$ as in \cite{giles} yields
\begin{eqnarray*}
\sum_{l=M}^L\frac{h_l}{N_l} & = & \mathcal{O}(\epsilon^2) \\
\sum_{l=M+1}^{L}\sum_{q=M+1}^L\mathbb{I}_{A}(l,q)\frac{h_l^{1/2}h_q^{1/2}}{N_lN_q} & = & \mathcal{O}(\epsilon^2).
\end{eqnarray*}
So, we have achieved a MSE of at most $\mathcal{O}(\epsilon^2)$. 
The cost to achieve this MSE is 
$$
\sum_{l=M}^Lh_l^{-l}N_l =  \mathcal{O}(\epsilon^{-2}\log(\epsilon)^2).
$$
If one simply used a Markov chain simulation for $\pi^L$ or using the original approach in \cite{bert} the cost to achieve this same MSE would be $\mathcal{O}(\epsilon^{-3})$; a significant increase.

\section{Numerical results}\label{sec:numerics}

\subsection{Linear quadratic Gaussian control}
We consider a linear quadratic Gaussian (LQG) control problem where the underlying continuous-time linear controlled process is given by 
\begin{equation}
dX_t = AX_t dt + Bu(t,X_t) dt + dW_t
\end{equation}
with $t \in [0,T]$, $X_t \in \mathbb{R}$, $u(t,X_t) \in \mathbb{R}$ and $W_t \in \mathbb{R}$ denoting a standard Brownian motion in $\mathbb{R}$. The estimation of the optimal control that minimizes the following quadratic cost function is considered
\begin{equation}
w(t,x,u) = \mathbb{E}_u^{t,x}\left[FX_T^2 + \int_{t}^{T}\{QX_s^2 + Ru(s,X_s)^2\}\,ds\right].
\end{equation}

In our experiments, we set $A = -1$, $B = 1$, $F = 1$, $Q = 1$, $R = 0.1$, $M = 4$ and initialize the process at $X_0 = -0.1$. The estimation of the optimal control at time $t=0$ using the different methods considered is compared:
the standard Monte Carlo (MC) approach described in Section \ref{sec:stand_appr} (see also \cite{bertoli2015nonlinear, kappen2005linear}), 
single level PIMH introduced Section \ref{sec:smoothing}, and multilevel Monte Carlo (MLMC) approach based on PIMH as detailed in Section \ref{sec:MLMC}.
For the MLMC method, we set the number of MCMC samples according to the multilevel analysis, i.e.~$N_l = \mathcal{O}(\epsilon^{-2}h_lL)$ and $L = \mathcal{O}(|\text{log}(\epsilon)|)$, with the number of particles in SMC to be fixed at $N_p = 500$. 

For the single level PIMH method, the number of MCMC samples is $N = \mathcal{O}(\epsilon^{-2})$. 
The single level PIMH algorithm and the multilevel PIMH algorithm are compared for a time horizon of length $T = 1$, with the MSE v.s. cost plot presented in Figure \ref{fg:1}. 
For the same level of MSE, the cost reduction in the multilevel approach is clear. 
All three approaches are then compared for a longer time horizon of length $T = 10$. The number of samples is set so that the computational cost of the standard MC approach and that of the PIMH method are the same. 
From Figure \ref{fg:2}, we see a marked reduction in computational cost when employing single level PIMH 
as compared to the standard MC approach, and a further reduction in cost with the multilevel PIMH approach. 
%The standard MC approach, the single level PIMH algorithm and the multilevel PIMH algorithm are compared when a larger $T = 10$. 
%The number of samples is set so that the computational cost of the standard MC approach and that of the PIMH method are the same. 
%Figure \ref{fg:2} illustrates a large increase in variance for the standard MC approach as $T$ grows 
%From the MSE v.s. cost plot in Figure \ref{fg:2}, a large increase in variance for the standard MC approach as $T$ grows is displayed and the cost improvement of the multilevel approach is evident.

\begin{figure}
	\centering
	\captionsetup{justification=centering}
	\includegraphics[width=9cm, height=5cm]{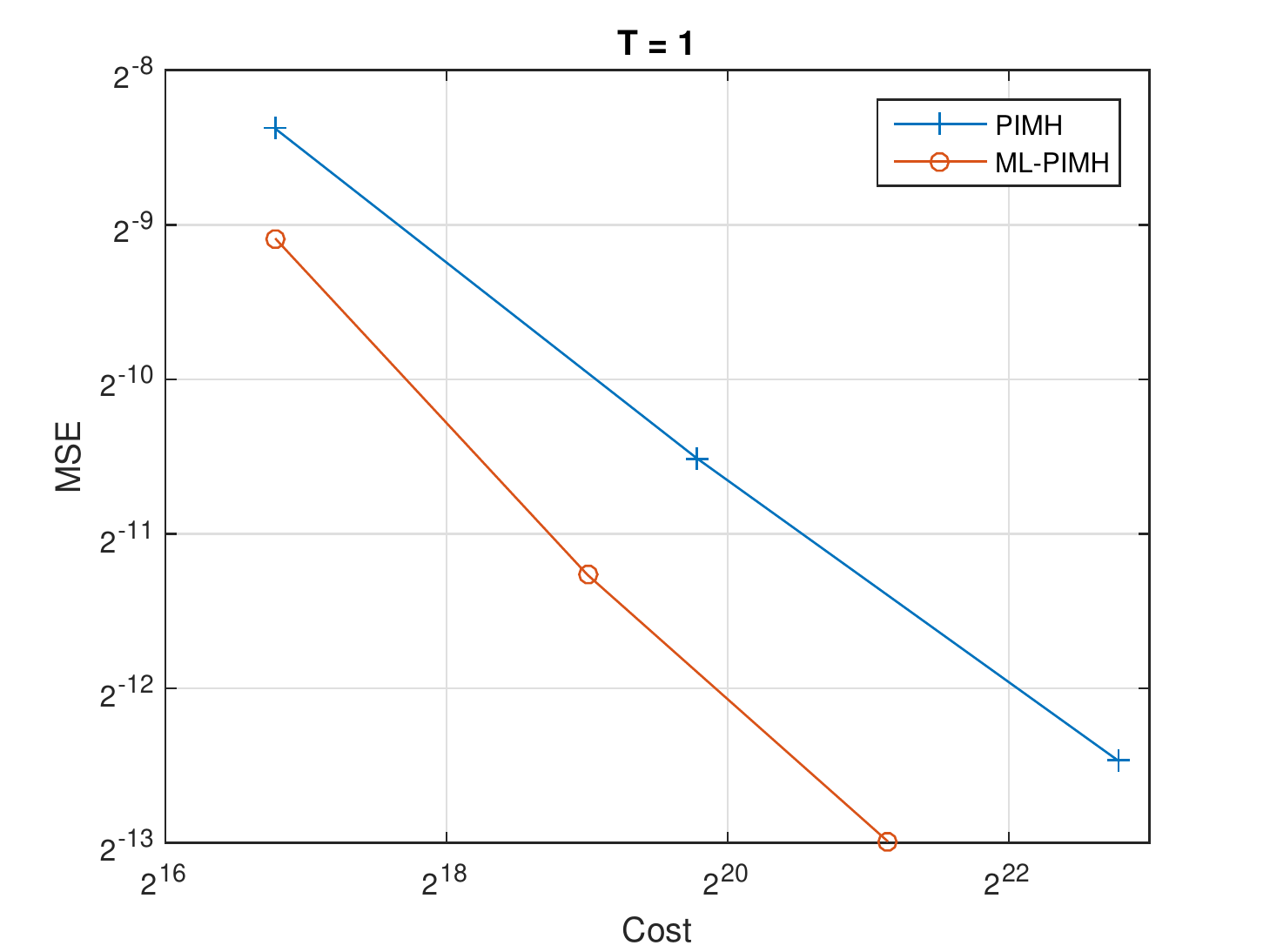}
	\caption{MSE v.s. cost for LQG model with time horizon $T=1$.}
	\label{fg:1}
\end{figure}

\begin{figure}
	\centering
	\captionsetup{justification=centering}
	\includegraphics[width=9cm, height=5cm]{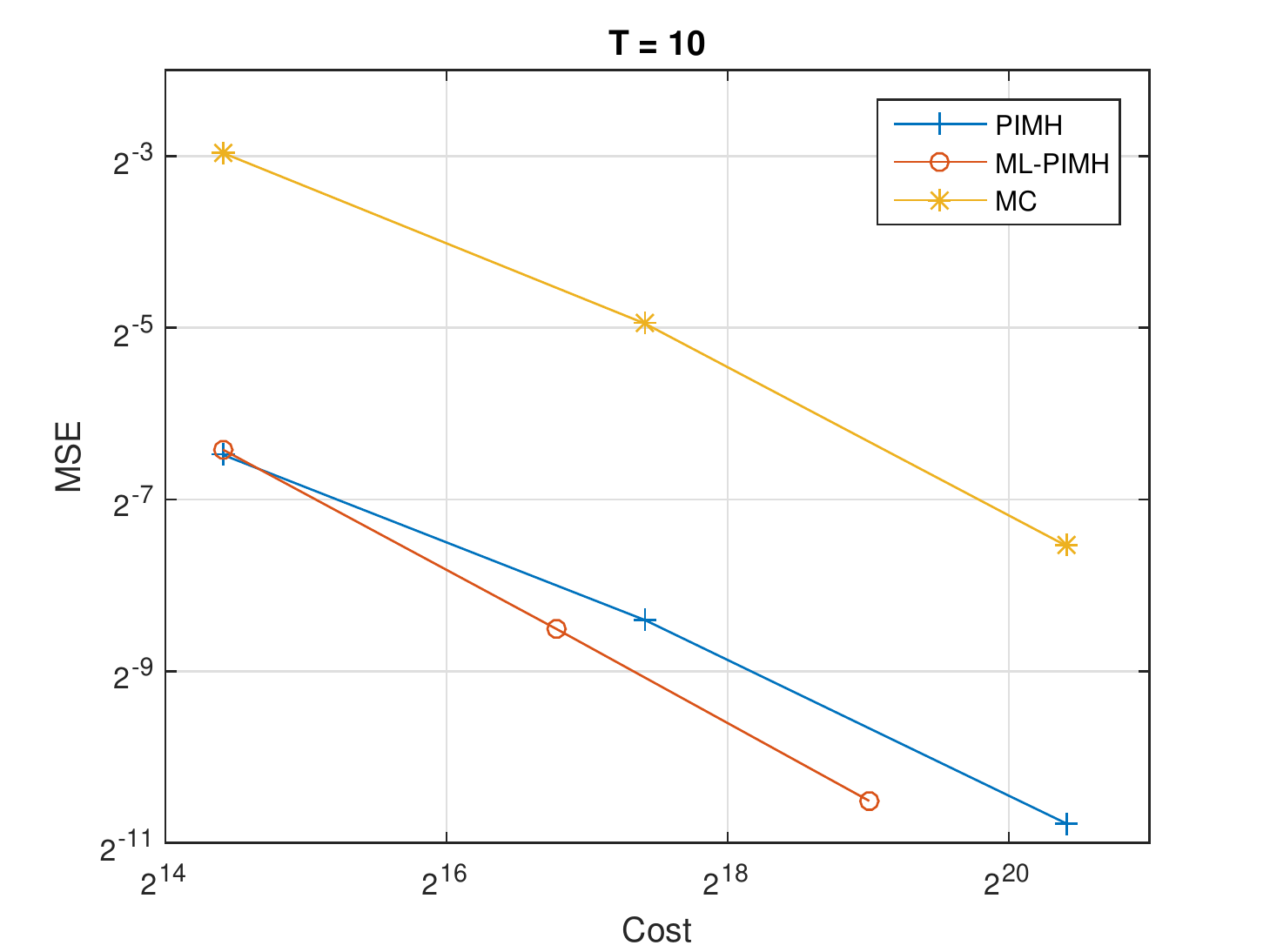}
	\caption{MSE v.s. cost for LQG model with time horizon $T=10$.}
	\label{fg:2}
\end{figure}

\subsection{Nonlinear compartmental model}

\subsubsection{Model specification}

We consider the optimal control of a stochastic compartmental model for an epidemic with cost-controlled vaccination. The state variables are $X_t=(S_t,I_t,V_t,R_t)\in\mathbb{R}^4$ corresponding to susceptible (S), infected (I), vaccinated (V), and removed (R) individuals in a population with the constraint
\begin{equation}
	S_t+I_t+V_t+R_t = 1\label{SIVRconstraint1}
\end{equation}
for all $t\geq0$. The controlled model considered herein respects this constraint for all $t>0$, whenever it is enforced to hold at time $0$; see \cite{Tornatore2014}.
We consider a modification of \cite{Tornatore2014} suitable for our purposes,
\begin{equation}
\begin{split}
	dS_t & ~=~ \left(\beta - \beta\,S_t - \kappa\,I_t\,S_t +\theta\,V_t -S_t\,u(t,X_t) \right)dt - \sigma\,S_t\,dW_t \\
	dI_t & ~=~ \left(\kappa\,S_t\,I_t+\varepsilon\,\kappa\,V_t\,I_t-\lambda\,I_t-\beta\,I_t + \varrho\,S_t\,u(t,X_t)\right)dt + \sigma\,(S_t-\varepsilon\,S_t - \sigma_\varrho\,\varrho\,S_t)\,dW_t \\
	dV_t & ~=~ \left(-\varepsilon\,\kappa\,I_t\,V_t -\beta\,V_t - \theta\,V_t + (1-\varrho)\,S_t\,u(t,X_t)\right)dt + \sigma\,(\varepsilon\,S_t + \sigma_\varrho\,\varrho\,S_t)\,dW_t \\
	dR_t & ~=~ \left(\lambda\,I_t-\beta\,R_t \right)dt 
\end{split} \label{model1}
\end{equation}
where $W_t\in\bbR$ denotes a standard Brownian motion in $\mathbb{R}$. In this model, the birth and death rate are given by $\beta\in(0,1)$, and the infection rate is controlled by $\kappa>0$, known as the contact rate. The parameter $\lambda\geq0$ controls the curing rate, $\theta\geq0$ controls the loss of vaccine effectiveness, and $\varepsilon\in(0,1)$ controls the efficacy of the vaccination protocol, i.e. letting $\varepsilon = 0$ would imply the vaccine is perfectly effective, while $\varepsilon=1$ implies the vaccination has no effect. The parameter $0<\varrho\approx0$ is necessary for our model to be well-defined and taken small enough so that it has no effect qualitatively. 

The control input $u\in\bbR$ specifies the fraction of the susceptible class being vaccinated at any moment. Note that although we would like $u\in[0,1]$, values outside this constraint cause no mathematical difficulty and do not pose a problem for satisfaction of the constraint (\ref{SIVRconstraint1}), i.e. $d(S_t+I_t+V_t+R_t)=0$.
Given some fixed terminal time $T>0$, the cost function we aim to minimize is given by
\begin{equation}\label{eqn:SIVR_value}
	w(t,x,u) = \bbE_u^{t,x}\left[I_T^2 + \int_{t}^{T} \{q\,I_s + r\,u(s,X_s)^2\}~ds\right],
\end{equation}
for $t\in[0,T]$, where $q,r>0$ are weighting parameters. Note that the running cost is linear in the state.

The basic reproduction rate is $\mathfrak{R}_0 = \kappa/(\beta+\lambda)$ and, with $\theta=\varrho=0$, it is shown in \cite{Witbooi2015} that for $\mathfrak{R}_0<1$ the stochastic system (\ref{model1}) is almost surely exponentially stable with any constant $u$ to the equilibrium
$$
	(S^*,I^*,V^*,R^*) ~=~ \left(\frac{\beta}{\beta+u},0,1-\frac{\beta}{\beta+u},0 \right).
$$
Hence, we take parameters such that $\mathfrak{R}_0>1$ going forward. In particular, we take $\beta = 0.016$, $\kappa = 0.55$, $\lambda = 0.45$ and $\varepsilon =0.4$. We also take $\theta=0.1$, $\varrho=0.01$ and $\sigma=0.4$. The initial condition is $(S_0,I_0,V_0,R_0) = (0.75,0.15,0.05,0.05)$. 
To check our assumptions stated in Section \ref{sec:problem}, we note that
\begin{align*}
&\gamma\,r^{-1}\left(\begin{array}{c} -S_t \\  \varrho\,S_t  \\ (1-\varrho)\,S_t \\ 0\end{array}\right)\left(\begin{array}{cccc} -S_t, &  \varrho\,S_t,  & (1-\varrho)\,S_t, & 0\end{array}\right)\\
& = \sigma^2\left(\begin{array}{c} -S_t \\ (1-\varepsilon-\sigma_\varrho\,\varrho)\,S_t  \\  (\varepsilon+\sigma_\varrho\,\varrho)\,S_t \\ 0\end{array}\right)\left(\begin{array}{cccc} -S_t, & (1-\varepsilon-\sigma_\varrho\,\varrho)\,S_t,  &  (\varepsilon+\sigma_\varrho\,\varrho)\,S_t, & 0\end{array}\right)
\end{align*}
for some $\gamma\in\bbR$. Indeed, we require $\varepsilon + (\sigma_\varrho+1)\varrho = 1$ in order to find $\gamma$ uniquely. Given our prior parameters this implies $\sigma_\varrho=59$ and then $\gamma = 0.16r$.
We only need the left inverse of $(-S_t, \varrho S_t, (1-\varrho)S_t )^{\top}$. 
Note in the control computation we can act as if the system is three-dimensional and ignore the dynamics of $R_t$. The left inverse in this case is
\begin{align*}
\left(\begin{array}{c} -S_t \\  \varrho\,S_t  \\ (1-\varrho)\,S_t \end{array}\right)^{-1} = \left[\left(\begin{array}{cccc} -S_t, &  \varrho\,S_t,  & (1-\varrho)\,S_t \end{array}\right) \left(\begin{array}{c} -S_t \\  \varrho\,S_t  \\ (1-\varrho)\,S_t \end{array}\right)\right]^{-1}\left(\begin{array}{cccc} -S_t, &  \varrho\,S_t,  & (1-\varrho)\,S_t \end{array}\right)
\end{align*}
which in general exists since the inverse on the right hand side in general exists.

\subsubsection{Numerical results}

In our experiments, we set $q = 1$, $r = 0.05$, $T = 3$ and $M = 3$. 
In the following, we will compare the single level PIMH algorithm to the multilevel PIMH algorithm for the task of optimal 
control estimation.
As the standard MC approach did not perform well in the simple LQG model, it is not considered for this application. 
A trajectory of the controlled process generated by the two algorithms are shown in Figure \ref{fg:3}. 
%Through the iterative computation of the optimal control and the movement of the state according to the control, a single trajectory of the states generated from the PIMH algorithm and the multilevel PIMH algorithm is illustrated in Figure \ref{fg:3}. 
It is clear from Figure \ref{fg:3} that the infected and the susceptible compartments are decreasing over time, while the vaccinated and the removed compartments are increasing over time.

\begin{figure}
	\centering
	\captionsetup{justification=centering}
	\includegraphics[width=8cm, height=5cm]{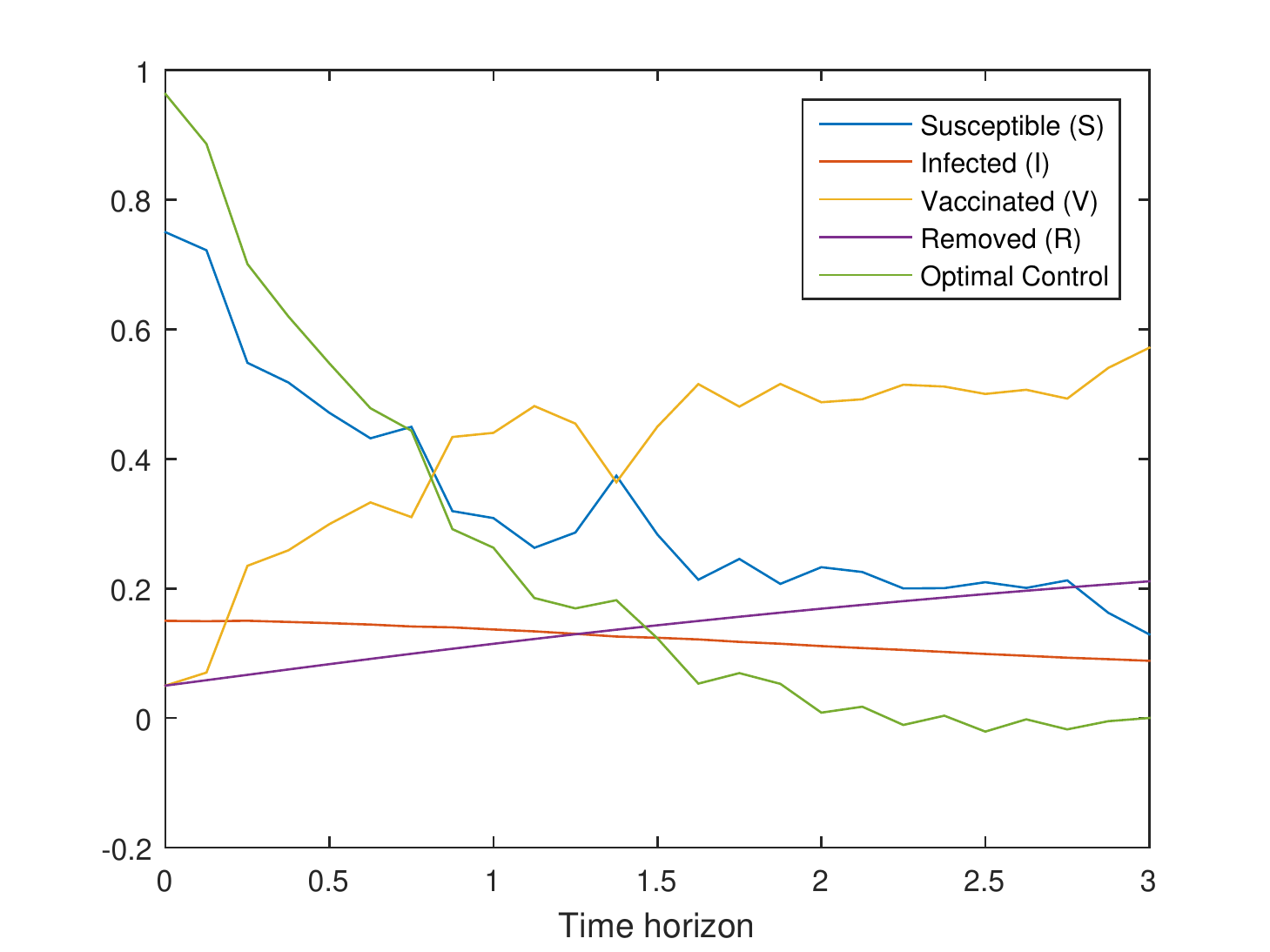} 
	\includegraphics[width=8cm, height=5cm]{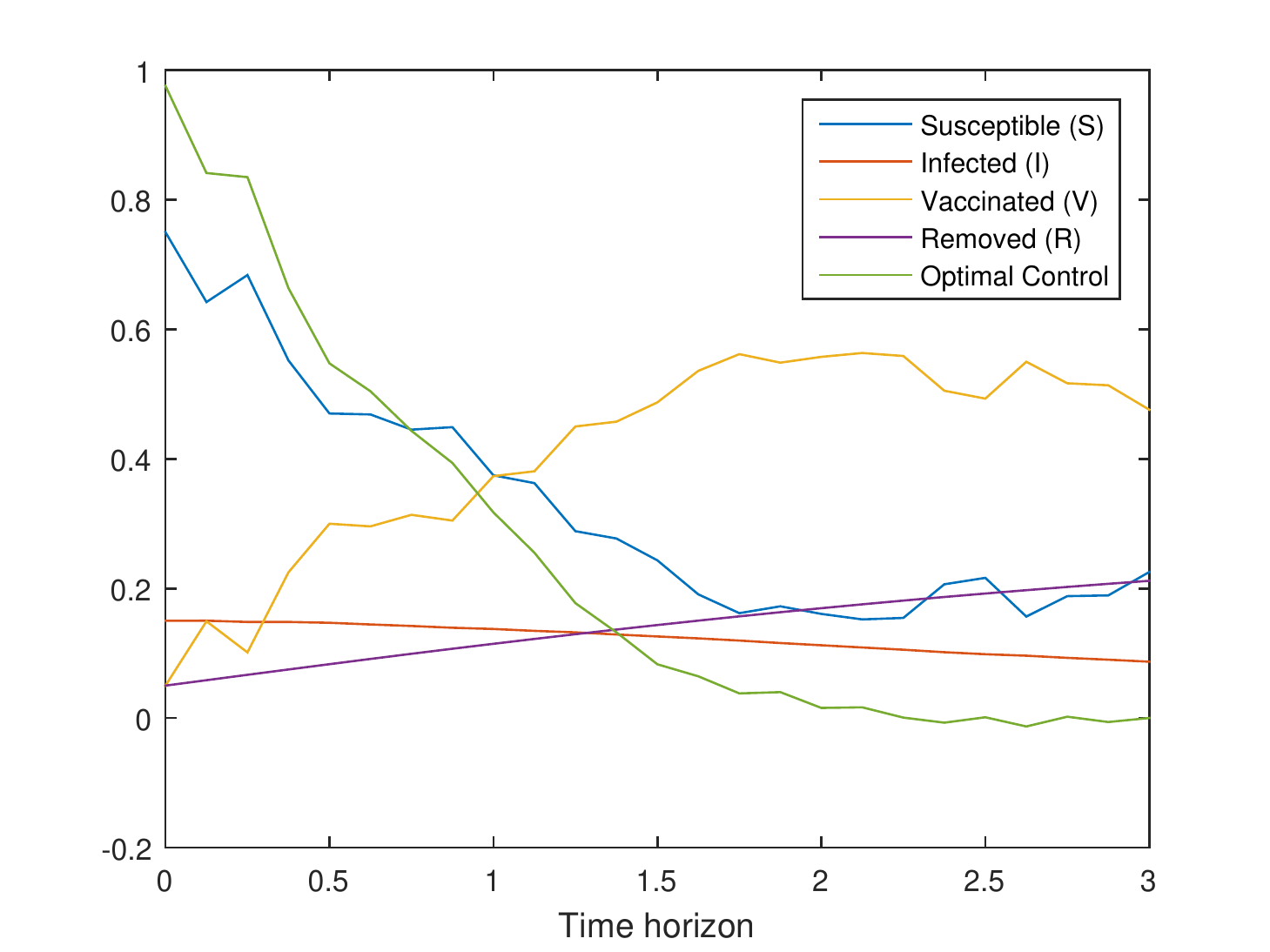}
	\caption{A trajectory of the controlled process and the corresponding approximation of the optimal control 
	generated by the single level PIMH algorithm (left) and the multilevel PIMH algorithm (right).}
	\label{fg:3}
\end{figure}

In Figure \ref{fg:4}, we compare the two algorithms at a fixed computational cost by reporting the sample average (left panel) 
and sample variance (right panel) of the value function, computed using $20$ independent repetitions of each algorithm. 
We observe that the multilevel approach achieves lower values on average in terms of the objective (\ref{eqn:SIVR_value}) with much smaller variance.

\begin{figure}
	\centering
	\captionsetup{justification=centering}
	\includegraphics[width=8cm, height=5cm]{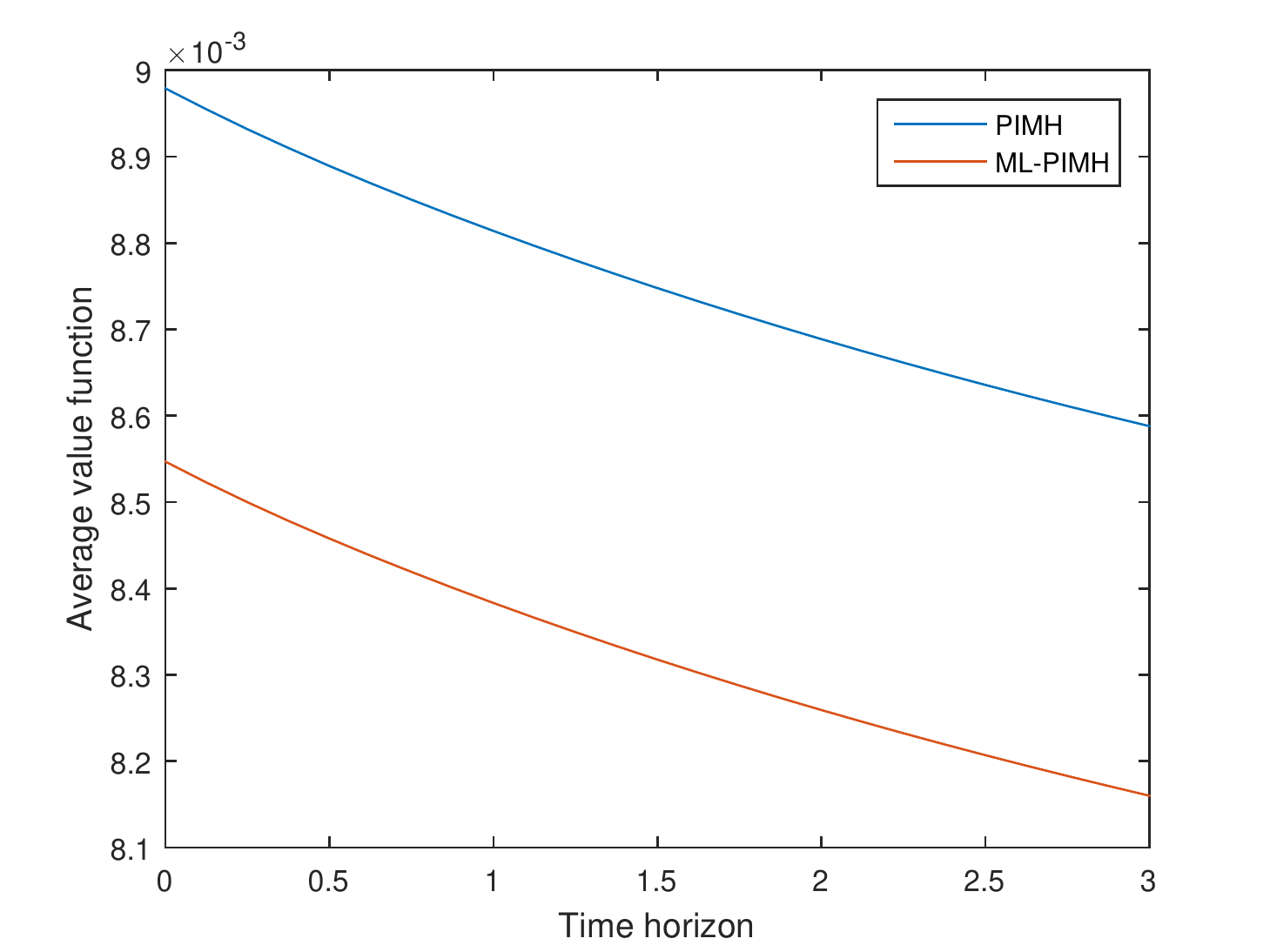} 
	\includegraphics[width=8cm, height=5cm]{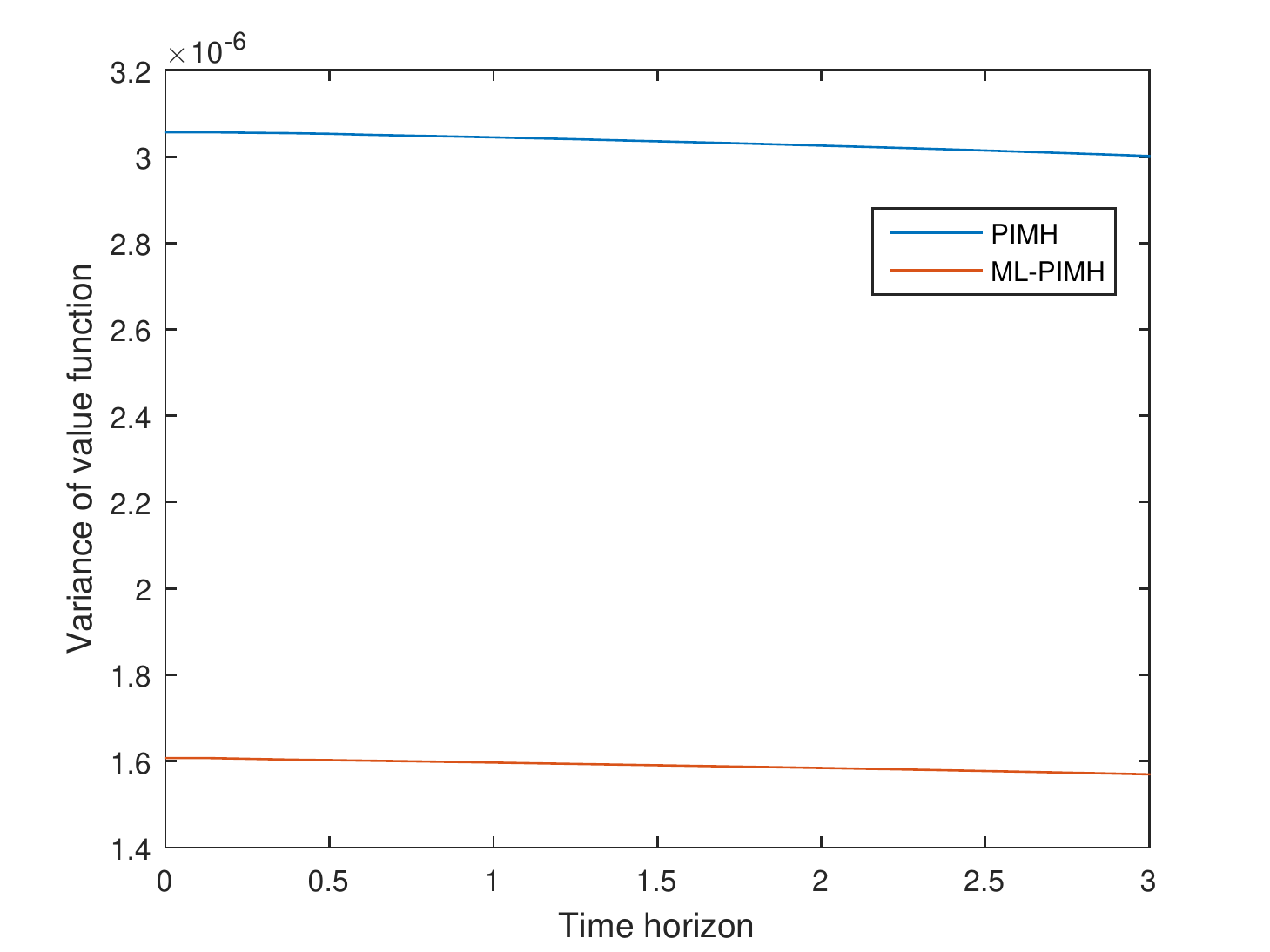}
	\caption{Sample average (left) and sample variance (right) of value function over time interval $[0,T]$.}
	\label{fg:4}
\end{figure}

Lastly, for the task of estimating the optimal control at time $t=0$, we present a MSE v.s.~cost plot in Figure \ref{fg:5}. 
For the MLMC method, we set the number of MCMC samples according to the multilevel analysis
i.e.~$N_l = \mathcal{O}(\epsilon^{-2}h_lL)$ and $L = \mathcal{O}(|\text{log}(\epsilon)|)$, with the number of particles in SMC to be fixed at $N_p = 200$. 
For the singe level PIMH method, the number of MCMC samples is taken as $N = \mathcal{O}(\epsilon^{-2})$.  
The true value is computed by running the latter algorithm at one plus the most precise level (i.e.~$L + 1$) and the MSE is computed using $20$ independent repetitions of each algorithm. The results illustrate that the multilevel approach offers 
significant reduction in computational cost.

\begin{figure}
	\centering
	\captionsetup{justification=centering}
	\includegraphics[width=9cm, height=5cm]{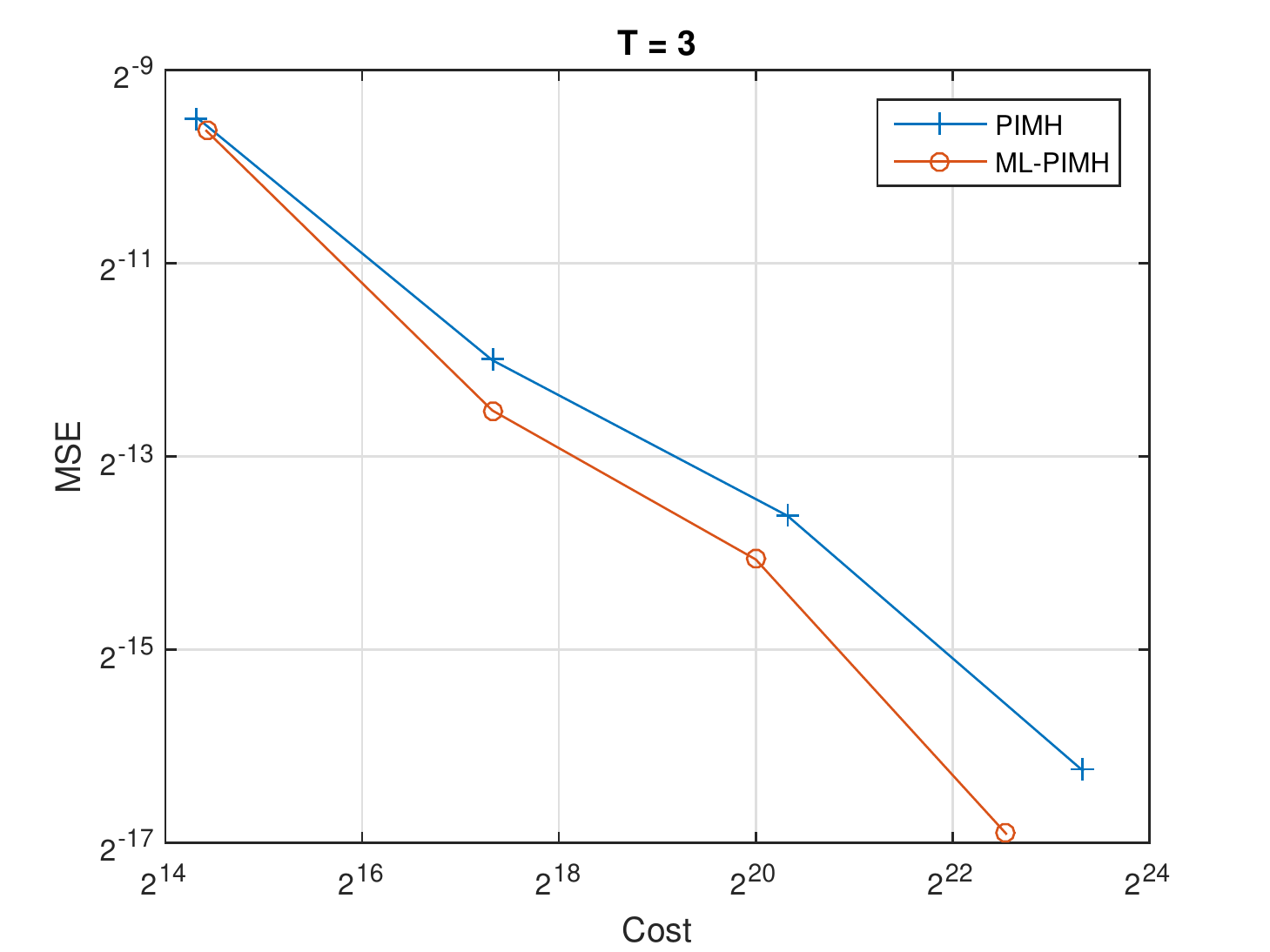}
	\caption{MSE v.s. cost for nonlinear compartmental model.}
	\label{fg:5}
\end{figure}

\subsubsection*{Acknowledgements}
AJ \& YX were supported by an AcRF tier 2 grant: R-155-000-161-112. AJ is affiliated with the Risk Management Institute, the Center for Quantitative Finance 
and the OR \& Analytics cluster at NUS. AJ was supported by a KAUST CRG4 grant ref: 2584.

\appendix

\section{Technical results}

Throughout our proofs $C$ is a finite constant that does not depend upon $l$ and whose value may change on each appearance.

\subsection{Assumptions}\label{app:ass_state}

In the context of Theorem \ref{theo:main_thm}, to shorten our proofs, we will suppose that the
Markov chain(s) are started in stationarity. This latter assumption can be removed with some work, but is unnecessary in order to convey the point of our approach.
Note also that we are assuming that $r$ is fixed throughout and there is an additional bias which is not addressed.
In order to derive our theoretical results, in addition to the assumptions that have already been made, we will make the following assumptions.
Below $\mathcal{P}(\mathsf{W}_l)$ denotes the collection of probability measures on $\mathsf{W}_l$.

\begin{hypA}\label{ass:1}
$\phi$, $\ell$, $e$, $f$, $g$ are all bounded and measurable. In addition $\phi$, $\ell$ are Lipschitz.
Set $\alpha = e^{-1}g$, then we assume each element of $\alpha$ is bounded and Lipschitz.
\end{hypA}
\begin{hypA}\label{ass:2}
$K_l$ is reversible. There exists a $\kappa\in(0,1)$ such that for each $l\in\{M,\dots,L\}$ there exists a $\nu\in\mathcal{P}(\mathsf{W}_l)$  
such that for any $\eta:\mathsf{W}_l\rightarrow\mathbb{R}$ bounded, measurable and Lipschitz, $x\in\mathsf{W}_l$
$$
\int_{\mathsf{W}_l} \eta(x') K_l(x,dx')  \geq \kappa \int_{\mathsf{W}_l}\eta(x')\nu(dx').
$$
\end{hypA}

\begin{rem}\label{rem:2}
In (A\ref{ass:2}) we have assumed a mixing rate that will be independent of $l$. At first glance, it may seem that this is not possible in practice.
However, using Lemma \ref{lem:a1} below, one can easily establish that (for example) the PIMH algorithm in \cite{andrieu} has such a property.
\end{rem}

\subsection{Proof of Theorem \ref{theo:main_thm}}\label{app:main_thm}

The proof is constructed by using several results, which are first quoted and proved later on in the appendix.

\begin{theorem}\label{theo:1}
Assume (A\ref{ass:1}-\ref{ass:2}). Then there exists a $C<\infty$ such that for any $l\in\{M+1,\dots,L\}$, $N_l\geq 1$:
$$
\mathbb{E}[(u^{l,N_{l}}(0,x_0)-u^{l-1,N_{l}}(0,x_0)-\{u^{l}(0,x_0)-u^{l-1}(0,x_0)\})^2] \leq \frac{C h_l}{N_l}.
$$
\end{theorem}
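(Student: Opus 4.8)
The plan is to decompose the left-hand side into a ``statistical error'' (the fluctuation of the MCMC estimator around its target) plus a ``bias/consistency'' contribution, and then show the first is $\mathcal{O}(h_l/N_l)$ while the second vanishes because the estimator is asymptotically exact for the quantity $u^{l}(0,x_0)-u^{l-1}(0,x_0)$. Concretely, recall from \eqref{eqn:diff_levels} that $u^{l}(0,x_0)-u^{l-1}(0,x_0)$ is a difference of ratios of expectations under $\pi^{l,l-1}$, and that the corresponding estimator $u^{l,N_l}(0,x_0)-u^{l-1,N_l}(0,x_0)$ is the same difference of ratios but with expectations replaced by ergodic averages along the PIMH chain of Algorithm \ref{alg:PIMH_extended}. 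So the first step is to write $u^{l,N_l}(0,x_0)-u^{l-1,N_l}(0,x_0)$ as $F(\mu_{N_l})$ for a smooth (Lipschitz, with bounded derivatives) functional $F$ of the four empirical averages $\mu_{N_l}=(\widehat{N}_1,\widehat{D}_1,\widehat{N}_2,\widehat{D}_2)$ of the integrands $\varphi_l\check H^{l,1}$, $\check H^{l,1}$, $\varphi_{l-1}\check H^{l,2}$, $\check H^{l,2}$, with $F(\mu_\infty)=u^l(0,x_0)-u^{l-1}(0,x_0)$.

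\textbf{Key steps, in order.} First, I would establish uniform (in $l$) moment bounds on the integrands: by (A\ref{ass:1}) the observation densities $G_k^l$, $\check G_k^l$ are bounded above and below away from $0$ on a uniform scale once multiplied out along the path (the per-step factors are $\exp\{\mp h\ell/\gamma\}=1+\mathcal{O}(h)$ and $\check G=G+1$ or $\max(G,G)$), so by the construction in \eqref{eqn:check_obs} the Radon--Nikodym derivatives $\check H^{l,1}$, $\check H^{l,2}$ in \eqref{eqn:RNs} are bounded above and below by finite constants independent of $l$ — this is exactly the design property highlighted after \eqref{eqn:RNs}. Also the denominators $\mathbb{E}_{\pi^{l,l-1}}[\check H^{l,j}]$ are bounded below uniformly, so $F$ has bounded derivatives on a neighborhood of $\mu_\infty$ that is reached with high probability. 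Second, I would invoke (A\ref{ass:2}): the PIMH kernel $K_l$ is reversible and satisfies a one-step minorization with rate $\kappa\in(0,1)$ independent of $l$ (Remark \ref{rem:2}), hence is uniformly ergodic with a spectral gap bounded below, and therefore for any bounded Lipschitz $\eta$ the ergodic average satisfies $\mathbb{E}[(\widehat{\eta}_{N_l}-\pi^{l,l-1}(\eta))^2]\leq C\,\mathrm{Var}_{\pi^{l,l-1}}(\eta)/N_l$ with $C$ depending only on $\kappa$ — standard MCMC CLT/variance bounds for uniformly ergodic reversible chains (I would cite the relevant lemma proved later in the appendix, e.g.\ Lemma \ref{lem:a1}). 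Third — and this is where the factor $h_l$ enters — I would show the key variance bound $\mathrm{Var}_{\pi^{l,l-1}}(\varphi_l\check H^{l,1}-\varphi_{l-1}\check H^{l,2})=\mathcal{O}(h_l)$ (and likewise that the two ``difference'' combinations relevant to $F$ have $\mathcal{O}(h_l)$ variance), using that the coupled Euler scheme \eqref{eqn:coupled_euler_maruyama} with the shared Brownian increments $W_k(l-1)=W_{2k-1}(l)+W_{2k}(l)$ makes the two paths $\mathcal{O}(h_l^{1/2})$ close in $L^2$, so that $\varphi_l(Z_{0:r}(l))-\varphi_{l-1}(Z_{0:r}(l-1))$ is $\mathcal{O}(h_l^{1/2})$ in $L^2$ by the Lipschitz property of $\alpha=e^{-1}g$ in (A\ref{ass:1}), and $\check H^{l,1}-\check H^{l,2}$ is small because both are close to $1$ and differ by $\mathcal{O}(h_l)$ telescoping factors. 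Fourth, I would Taylor-expand $F(\mu_{N_l})-F(\mu_\infty)$ to first order, bound the remainder using the uniform bounds from step one, and combine with steps two and three to get $\mathbb{E}[(u^{l,N_l}(0,x_0)-u^{l-1,N_l}(0,x_0)-\{u^l(0,x_0)-u^{l-1}(0,x_0)\})^2]\leq C h_l/N_l$, absorbing any lower-order $1/N_l^2$ terms into the same bound.

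\textbf{Main obstacle.} The crux is step three: proving the $\mathcal{O}(h_l)$ variance (equivalently, the strong-error/coupling) estimate for the difference functional under the \emph{change-of-measure} representation \eqref{eqn:diff_levels}, rather than under the natural couplings of $\pi^l$ and $\pi^{l-1}$. One has to carefully track that (i) the maximal coupling of Brownian increments yields the standard $\mathcal{O}(h_l)$ $L^2$ strong error between the two Euler discretizations (using Lipschitz $f,g$ and the moment bounds available since $f,g$ are bounded), (ii) the nonlinear ratio structure in $F$ does not destroy this rate — which is why the uniform upper and lower bounds on $\check H^{l,j}$ and on the denominators are essential — and (iii) the PIMH variance constant genuinely does not degrade with $l$, which is where Remark \ref{rem:2}/Lemma \ref{lem:a1} (a uniform-in-$l$ minorization for the particle filter proposal) is doing real work. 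Everything else is routine: bounded/Lipschitz bookkeeping, a geometric-ergodicity variance bound, and a first-order Taylor expansion of a smooth functional of empirical means.
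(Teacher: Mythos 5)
Your plan is correct and follows essentially the same route as the paper, which proves Theorem \ref{theo:1} by invoking the ratio-estimator analysis of \cite[Theorem 3.1]{jasra} for a uniformly ergodic chain (your steps two and four), supplemented by Lemma \ref{lem:a1} for the uniform bounds on the weights and Radon--Nikodym derivatives (your step one) and by Proposition \ref{prop:a1} for the key $\mathcal{O}(h_l)$ second-moment bound on $\varphi_l\check{H}^{l,1}-\varphi_{l-1}\check{H}^{l,2}$ via the coupled Euler discretizations (your step three, which you rightly identify as the crux). The only slip is attributing the MCMC variance bound to Lemma \ref{lem:a1}, which in fact only supplies the bounds on the potentials; the ergodic-average variance bound comes from (A\ref{ass:2}) together with \cite[Proposition A.1]{jasra}.
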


\begin{proof}
This essentially the same as \cite[Theorem 3.1]{jasra} except that one needs Proposition \ref{prop:a1} and Lemma \ref{lem:a1} in Appendix \ref{app:main_theo}.
\end{proof}

The proof of the following result is in Appendix \ref{app:prop_proof}.

\begin{prop}\label{prop:main_prop}
Assume (A\ref{ass:1}-\ref{ass:2}). Then there exists a $C<\infty$ such that for any $l\in\{M+1,\dots,L\}$, $N_l\geq 1$:
$$
|\mathbb{E}[u^{l,N_{l}}(0,x_0)-u^{l-1,N_{l}}(0,x_0)-\{u^{l}(0,x_0)-u^{l-1}(0,x_0)\}]| \leq \frac{C h_l^{1/2}}{N_l}.
$$
\end{prop}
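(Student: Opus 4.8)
The plan is to control the bias of the multilevel increment estimator by decomposing the MCMC-based ratio estimators and tracking how the PIMH chain's stationarity interacts with the change-of-measure identity \eqref{eqn:diff_levels}. Write $u^{l,N_l}(0,x_0)-u^{l-1,N_l}(0,x_0)$ as the difference of two empirical ratios, say $R_1^{N_l}/D_1^{N_l} - R_2^{N_l}/D_2^{N_l}$, where the numerators and denominators are $N_l^{-1}$ sums of $\varphi_l(Z_{0:r}^i(l))\check H^{l,1}$, $\check H^{l,1}$, etc., evaluated along the PIMH-generated chain targeting $\pi^{l,l-1}$. Because we assume (see Appendix \ref{app:ass_state}) that the chain is started in stationarity, each $\mathbb{E}[R_j^{N_l}]$ and $\mathbb{E}[D_j^{N_l}]$ equals the corresponding $\pi^{l,l-1}$-expectation exactly, so by \eqref{eqn:diff_levels} the ``first-order'' part of the bias vanishes; the residual bias is entirely due to the nonlinearity of the map $(r,d)\mapsto r/d$.

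The key steps, in order, are: (i) recall from (A\ref{ass:1}) and (A\ref{ass:2}) that $\check H^{l,1},\check H^{l,2}$ are bounded above and below by $l$-independent constants (this is exactly the design property noted after \eqref{eqn:RNs}), so the denominators $D_j^{N_l}$ are bounded away from $0$ uniformly and all moments of the empirical quantities are finite; (ii) perform a second-order Taylor expansion of $r/d$ about $(\mathbb{E}[R_j^{N_l}],\mathbb{E}[D_j^{N_l}])=(\bar R_j,\bar D_j)$, giving $\mathbb{E}[R_j^{N_l}/D_j^{N_l}] = \bar R_j/\bar D_j + O(\mathrm{Var}(D_j^{N_l}) + |\mathrm{Cov}(R_j^{N_l},D_j^{N_l})|)$ plus a controlled remainder; (iii) invoke the uniform minorization (A\ref{ass:2}), hence geometric ergodicity of $K_l$ with $l$-independent rate, to bound these variances and covariances by $C/N_l$ times a constant that is itself uniform in $l$ (standard time-average variance bounds for reversible geometrically ergodic chains, as in the references to \cite{andrieu}; cf.\ Lemma \ref{lem:a1} and Proposition \ref{prop:a1} cited in the proof of Theorem \ref{theo:1}); (iv) crucially, show that the $O(1/N_l)$ prefactor carries a factor $h_l^{1/2}$: this comes from the strong-error coupling of the two Euler discretizations in \eqref{eqn:coupled_euler_maruyama} — under the Lipschitz assumptions the coupled paths at levels $l$ and $l-1$ differ in $L^2$ by $O(h_l^{1/2})$, so $\varphi_l(Z_{0:r}(l))\check H^{l,1} - \varphi_{l-1}(Z_{0:r}(l-1))\check H^{l,2}$ has $\pi^{l,l-1}$-variance $O(h_l^{1/2})$ (here the Lipschitz property of $\alpha=e^{-1}g$ in (A\ref{ass:1}) enters, since $\varphi_l$ is built from $\alpha$ and the increments), and this propagates through the difference of the two Taylor remainders; (v) combine to obtain $|\mathbb{E}[u^{l,N_l}-u^{l-1,N_l}-\{u^l-u^{l-1}\}]|\le C h_l^{1/2}/N_l$.

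I expect the main obstacle to be step (iv): isolating where the extra $h_l^{1/2}$ factor enters the \emph{bias} (as opposed to the variance, which is the content of Theorem \ref{theo:1}). One must be careful that the two ratios $R_1/D_1$ and $R_2/D_2$ share the same underlying coupled measure $\pi^{l,l-1}$, so their second-order correction terms partially cancel, and the surviving discrepancy is governed by the $L^2$-distance between the level-$l$ and level-$(l-1)$ marginals; making this cancellation rigorous requires writing the difference of the two Taylor expansions jointly rather than bounding each $O(1/N_l)$ term separately (a naive bound would only give $C/N_l$, losing the $h_l^{1/2}$). This is precisely the analogue of the argument in \cite[Section 3]{jasra}, and I would follow that structure, substituting our $\varphi_l$, $\check H^{l,j}$ and the minorization (A\ref{ass:2}) for the corresponding objects there, and citing Proposition \ref{prop:a1} and Lemma \ref{lem:a1} from Appendix \ref{app:main_theo} for the chain-level estimates. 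A secondary technical point is verifying the boundedness/Lipschitz regularity needed to apply those lemmas on the possibly enlarged space $\mathsf{W}_l$, but this is inherited from (A\ref{ass:1})--(A\ref{ass:2}) and the construction of the PIMH kernel.
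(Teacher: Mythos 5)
Your overall strategy is the same as the paper's: write the increment estimator as a difference of two empirical ratios over the PIMH chain targeting $\pi^{l,l-1}$, use stationarity to kill the ``linear'' part of the bias, and then extract $h_l^{1/2}$ from the $L^2$-closeness of the coupled level-$l$ and level-$(l-1)$ functionals (the paper's Proposition \ref{prop:a1}) and $N_l^{-1}$ from the uniform-in-$l$ ergodicity of the chain via Cauchy--Schwarz. The one structural difference is how the nonlinearity of $(r,d)\mapsto r/d$ is handled: you propose a second-order Taylor expansion about the stationary means, whereas the paper uses the \emph{exact} algebraic identity for $\frac{a^N}{b^N}-\frac{c^N}{d^N}-(\frac{a}{b}-\frac{c}{d})$ from \cite[Lemma D.5]{mlpf}, which expresses the error as a finite sum of terms each of which is a product of two or three factors (e.g.\ $(\frac{1}{b^N}-\frac{1}{b})(a^N-c^N-(a-c))$ and $(b-b^N)(a-c)$), so that a single application of Cauchy--Schwarz per term, together with Proposition \ref{prop:a1}, Lemma \ref{lem:a1} and the $N_l^{-1/2}$ rate for stationary time averages, gives $Ch_l^{1/2}/N_l$ with no remainder to control. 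Your route can be made to work, but you correctly identify that the two expansions must be differenced jointly; note that this is needed not only for the second-order terms but also for the third-order Taylor remainders, since a crude bound on each remainder separately gives $O(N_l^{-3/2})$, which is \emph{not} $O(h_l^{1/2}N_l^{-1})$ uniformly over $N_l\geq 1$ --- the remainder difference must itself be shown to carry an $h_l^{1/2}$ factor (e.g.\ via H\"older, pairing one differenced factor of size $O(h_l^{1/2}N_l^{-1/2})$ with quadratic factors of size $O(N_l^{-1})$). The exact identity sidesteps this entirely, which is why the paper uses it. One small correction to step (iv): the coupled functional difference has second moment $O(h_l)$ (hence $L^2$ norm $O(h_l^{1/2})$), not ``variance $O(h_l^{1/2})$''; as written the latter would only yield $h_l^{1/4}/N_l$, though your step (v) implicitly uses the correct $O(h_l)$ bound. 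You also need, as in the paper's $T_2$-type terms, the first-moment bound $|a-c|\leq Ch_l^{1/2}$ (the $q=1$ case of Proposition \ref{prop:a1}), which in your framework enters through the difference of the second-order Taylor coefficients.
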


\begin{rem}
We note that, via \cite[Proposition A.1.]{jasra}, a simple decomposition:
\begin{equation}\label{eq:simple_decomp}
\frac{a}{b}-\frac{c}{d} = \frac{a-c}{b}  + \frac{c[d-b]}{bd}
\end{equation}
for any $(a,b,c,d)\in\mathbb{R}$, $b\neq 0$, $d\neq 0$
and Lemma \ref{lem:a1} one can show that
\begin{equation}\label{eq:l_2_first_chain}
\mathbb{E}[(u^{M,N_{M}}(0,x_0)-u^{M}(0,x_0))^2] \leq \frac{C}{N_M}
\end{equation}
for $C<\infty$ independent of $N_M,l$. In addtion,  in the proof of Proposition \ref{prop:a1} we have established that 
\begin{equation}\label{eq:bias}
|u^{L}(0,x_0) - u^*(0,x_0)| \leq Ch_L^{1/2}
\end{equation}
where $C<\infty$ does not depend on $l$. \eqref{eq:bias} can be obtained using the bound for \eqref{eq:3} ($l=L$), \eqref{eq:simple_decomp} and Lemma \ref{lem:a1}.
\end{rem}

\begin{proof}[Proof of Theorem \ref{theo:main_thm}]
Using the $C_2-$inequality,
$$
\mathbb{E}\Big[\Big(\Big\{u^{M:L,N_{M:L}}(0,x_0) - u^{L}(0,x_0)\Big\}
+ u^{L}(0,x_0) - u^*(0,x_0)\Big)^2\Big]
\leq 
$$
\begin{equation}\label{eq:first_bound_mse}
2\Big(
\mathbb{E}\Big[\Big(u^{M:L,N_{M:L}}(0,x_0) - u^{L}(0,x_0)\Big)^2\Big] + |u^{L}(0,x_0) - u^*(0,x_0)|^2
\Big).
\end{equation}
First, apply the $C_2-$inequality to the variance term (the bias is the L.H.S.~of \eqref{eq:bias} and hence the variance is the left term on the R.H.S.~of \eqref{eq:first_bound_mse}),
splitting $u^{M,N_{M}}(0,x_0)-u^{M}(0,x_0)$ and the other terms. Second, apply Theorem \ref{theo:1}, Proposition \ref{prop:main_prop} and \eqref{eq:l_2_first_chain} to the variance terms and \eqref{eq:bias} to the bias term. This allows one to complete the proof.
\end{proof}

\subsection{Proofs for Theorem \ref{theo:1}}\label{app:main_theo}

Below for $i_1\in\{1,\dots,m\}$ we write the $i_1^{th}-$element of $m-$vector $\varphi_l(z_{h_l:M_lh_l})$ as
$\varphi_l(z_{h_l:M_lh_l})_{i_1}$. For $(i_1,i_2)\in\{1,\dots,m\}\times\{1,\dots,d\}$, we write the 
$i_1^{th},i_2^{th}$ element of $\alpha(z)$ as $\alpha(z)_{i_1i_2}$.
For ease of notation set
$$
T(i_1,l,q) := 
$$
$$
\mathbb{E}_{\pi^{l,l-1}}\Big[\Big(\varphi_l(Z_{h_l:M_lh_l}(l))_{i_1}\check{H}^{l,1}(Z_{h_l:1}(l),Z_{h_{l-1}:1}(l-1))-
\varphi_{l-1}(Z_{h_{l-1}:M_{l-1}h_{l-1}}(l-1))_{i_1}\check{H}^{l,2}(Z_{h_l:1}(l),Z_{h_{l-1}:1}(l-1))
\Big)^q\Big]^{3-q}.
$$

\begin{prop}\label{prop:a1}
Assume (A\ref{ass:1}). Then for any $i_1\in\{1,\dots,m\}$, $q\in\{1,2\}$, there exist a $C<+\infty$ such that for any $l\in\{M+1,\dots,L\}$
$$
T(i_1,l,q) \leq C h_l.
$$
\end{prop}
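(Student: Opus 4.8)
The plan is to estimate $T(i_1,l,q)$ by expanding the bracketed quantity and controlling each piece using the boundedness and Lipschitz hypotheses in (A\ref{ass:1}) together with standard $L_p$ bounds on Euler--Maruyama increments. First I would recall from the construction of the observation densities in \eqref{eqn:check_obs} that the Radon--Nikodym derivatives $\check{H}^{l,1}$ and $\check{H}^{l,2}$ in \eqref{eqn:RNs} are bounded above by a finite constant uniformly in $l$: indeed $G_k^l \le \check{G}_k^l$ pointwise on both odd indices (where $\check{G}_k^l = G_k^l + 1$) and even indices (where $\check{G}_k^l = \max\{G_k^l, G_{k/2}^{l-1}\}$), so each ratio of products is at most $1$ in absolute value. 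Hence $|\check{H}^{l,1}|, |\check{H}^{l,2}| \le 1$, and a fortiori they are bounded in $L_p(\pi^{l,l-1})$ for all $p$. Likewise, since $\alpha = e^{-1}g$ has bounded Lipschitz entries by (A\ref{ass:1}), the test functions $\varphi_l$ are sums of $2^{l-M+1}$ terms of the form $\alpha(z_{kh_l})\{z_{(k+1)h_l} - z_{kh_l} - f(z_{kh_l})h_l\} = \alpha(z_{kh_l}) g(z_{kh_l})W_{k+1}(l)$ (after rewriting via $g^{-1}g = I$), each of which is a bounded matrix times a Brownian increment of variance $h_l I_d$, so $\mathbb{E}_{\pi^{l,l-1}}[|\varphi_l|^p]^{1/p} = \mathcal{O}(h_l^{1/2} \cdot 2^{(l-M+1)/2} \cdot h_l^{1/2}) = \mathcal{O}(1)$ when $r = 2^{-(M-1)}$ is fixed — more precisely each $\varphi_l$ has $L_p$ norm $\mathcal{O}(1)$ uniformly in $l$, since the number of increments times $h_l$ equals $r$.

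The crux, and the reason the bound is $Ch_l$ rather than merely $C$, is the \emph{difference} structure: $T(i_1,l,q)$ for $q=1$ is the square of a first moment of a difference, and for $q=2$ it is a single second moment of a difference, and in both cases the difference $\varphi_l(Z_{0:r}(l))\check{H}^{l,1} - \varphi_{l-1}(Z_{0:r}(l-1))\check{H}^{l,2}$ must be shown to be $\mathcal{O}(h_l^{1/2})$ in $L_2(\pi^{l,l-1})$. I would split this difference as
\[
\big(\varphi_l(Z_{0:r}(l)) - \varphi_{l-1}(Z_{0:r}(l-1))\big)\check{H}^{l,1} + \varphi_{l-1}(Z_{0:r}(l-1))\big(\check{H}^{l,1} - \check{H}^{l,2}\big),
\]
and then bound each summand in $L_2$ via Cauchy--Schwarz, using the uniform $L_4$ bounds on $\varphi_{l-1}$ and the $L_\infty$ bound on $\check H^{l,1}$. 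The first piece is controlled by the strong error of the coupled Euler scheme \eqref{eqn:coupled_euler_maruyama}: since the fine and coarse chains are driven by the same Brownian path (the coarse increments being sums of consecutive fine ones), standard results give $\mathbb{E}\big[\sup_k |Z_{kh_l}(l) - Z_{kh_{l-1}/?}(l-1)|^2\big] = \mathcal{O}(h_l)$, and the Lipschitz/boundedness of $\alpha$, $g$, $f$ then transfers this to $\mathbb{E}_{\pi^{l,l-1}}[|\varphi_l(Z_{0:r}(l)) - \varphi_{l-1}(Z_{0:r}(l-1))|^2] = \mathcal{O}(h_l)$ (here one uses that on the odd-index times of the fine grid, which are not aligned with the coarse grid, the corresponding increments telescope appropriately into the difference). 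The second piece requires showing $\mathbb{E}_{\pi^{l,l-1}}[|\check{H}^{l,1} - \check{H}^{l,2}|^2] = \mathcal{O}(h_l)$; writing $\check{H}^{l,1} - \check{H}^{l,2} = (G^l(z(l)) - G^{l-1}(z(l-1)))/\check{G}^{l,l-1}$ and using that $\check{G}^{l,l-1}$ is bounded below by a positive constant (as $G_k^l \ge \exp(-\|\ell\|_\infty h_l/\gamma) \ge c > 0$ and the $+1$ / max constructions only help), this reduces to bounding $\mathbb{E}[|G^l(Z(l)) - G^{l-1}(Z(l-1))|^2]$, which follows from the Lipschitz property of $\ell,\phi$, the telescoping of the $\ell$-sums, and the $\mathcal{O}(h_l)$ strong error again.

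One subtlety to handle carefully is the replacement of $\pi^{l,l-1}$-expectations by expectations under the coupled law $p^{l,l-1}$ of the latent process: since $\pi^{l,l-1} = \check{G}^{l,l-1} p^{l,l-1} / C^{l,l-1}$ with $\check{G}^{l,l-1}$ bounded above and $C^{l,l-1}$ bounded below uniformly in $l$ (both following from the explicit form \eqref{eqn:check_obs} and the boundedness of $\ell,\phi$), any $L_p(p^{l,l-1})$ bound transfers to an $L_p(\pi^{l,l-1})$ bound up to an $l$-independent constant; I would dispatch this at the outset. The main obstacle I anticipate is the bookkeeping in the first piece of the split: the fine test function $\varphi_l$ sums over all fine-grid increments up to time $r$ including the odd-indexed times that have no coarse counterpart, so one must reorganize $\varphi_l - \varphi_{l-1}$ as a sum of pairs of consecutive fine increments minus the matching coarse increment, $\alpha(Z_{2jh_l}(l))(W_{2j+1}(l)+W_{2j+2}(l)) - \alpha(Z_{jh_{l-1}}(l-1))W_{j+1}(l-1)$ plus drift corrections, and then use both $W_{j+1}(l-1) = W_{2j+1}(l)+W_{2j+2}(l)$ and the strong error $|Z_{2jh_l}(l) - Z_{jh_{l-1}}(l-1)| = \mathcal{O}_{L_2}(h_l^{1/2})$ to collapse each pair to an $\mathcal{O}(h_l^{1/2})$ contribution in $L_2$; summing the $\mathcal{O}(r/h_l)$ such pairs (which are martingale-difference-like given the boundedness of $\alpha$) keeps the total at $\mathcal{O}(h_l^{1/2})$ in $L_2$ rather than blowing up, and squaring gives the claimed $\mathcal{O}(h_l)$. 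Everything else is routine application of Minkowski, Cauchy--Schwarz, and the cited strong-convergence estimates for Euler--Maruyama.
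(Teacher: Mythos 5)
Your proposal is correct in outline and delivers the claimed rate, but it takes a genuinely different route from the paper. The paper first uses the cancellation $\check{H}^{l,1}\,\check{G}^{l,l-1}=G^{l}$ (and likewise for $\check{H}^{l,2}$) under the change of measure from $\pi^{l,l-1}$ to $p^{l,l-1}$, so the Radon--Nikodym terms disappear entirely from the integrand in \eqref{eq:1}; it then inserts the \emph{continuum} quantity $\big(\sum_{i_2}\int_0^r\alpha(Z_s)_{i_1i_2}dW_s(i_2)\big)\exp\{-\frac{1}{\gamma}(\phi(Z_1)+\int_0^1\ell(Z_s)ds)\}$ and bounds the square difference by $2(T_1(i_1,l)+T_1(i_1,l-1))$, reducing everything to Euler-versus-SDE strong error (Lemmata \ref{lem:a2}--\ref{lem:a4}, which need the It\^{o} isometry and a Burkholder--Davis--Gundy bound for the stochastic integral). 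You instead keep the $\check{H}$ terms, split the integrand as $(\varphi_l-\varphi_{l-1})\check{H}^{l,1}+\varphi_{l-1}(\check{H}^{l,1}-\check{H}^{l,2})$, and compare the fine and coarse Euler chains \emph{directly} through their Brownian-increment coupling, exploiting $\|\check{H}^{l,i}\|_\infty\le 1$ and the martingale-difference structure of $\varphi_l-\varphi_{l-1}$. Both arguments are sound and rest on the same change-of-measure control (Lemma \ref{lem:a1}) and the same $\mathcal{O}(h_l^{1/2})$ strong rate; your version stays entirely at the discrete level and avoids continuous-time stochastic calculus, but it requires the extra term $\check{H}^{l,1}-\check{H}^{l,2}$ to be controlled (which you do correctly via the lower bound on $\check{G}^{l,l-1}$ and the Lipschitz property of $\phi,\ell$), whereas the paper's detour through the continuum makes that term vanish and, as a by-product, yields the bias estimate \eqref{eq:bias} that is reused in the proof of Theorem \ref{theo:main_thm} --- your route would need a separate (though essentially identical) argument for that bound.
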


\begin{proof}
We give the proof for $q=2$, the proof for the case $q=1$ follows by Jensen's inequality.

Let $\mathcal{Z}^{l,l-1}$ denote the normalizing constant of $\pi^{l,l-1}$. By Lemma \ref{lem:a1} it easily follows that 
$$
\mathcal{Z}^{l,l-1} \geq C.
$$
Thus it follows that
\begin{equation}\label{eq:1}
T(i_1,l,2) \leq C \int_{\mathbb{R}^{d(2^l+2^{l-1})}}\Big(\varphi_l(z_{h_l:M_lh_l}(l))_{i_1} \prod_{k=0}^{h_l^{-1}} G_k^l(z_{kh_l}(l)) -
\end{equation}
$$
\varphi_{l-1}(z_{h_{l-1}:M_{l-1}h_{l-1}}(l-1))_{i_1} \prod_{k=0}^{h_{l-1}^{-1}} G_k^{l-1}(z_{kh_{l-1}}(l-1))\Big)^2 p^{l,l-1}(d(z_{h_l:1}(l),z_{h_{l-1}:1}(l-1))).
$$
Define
$$
T_1(i_1,l) :=  \mathbb{E}_D\Big[\Big(\varphi_l(z_{h_l:M_lh_l}(l))_{i_1} \prod_{k=0}^{h_l^{-1}} G_k^l(z_{kh_l}(l))-
(\sum_{i_2=1}^d\int_{0}^r\alpha(Z_s)_{i_1i_2}dW_{s}(i_2))\exp\{-\frac{1}{\gamma}(\phi(Z_1)+\int_{0}^1 \ell(Z_s)ds)\}
\Big)^2\Big]
$$
where we are denoting expectations w.r.t.~the law of the diffusion \eqref{eq:diff} as $\mathbb{E}_{D}$ and $W_{s}(i_2)$ is the $i_2^{th}-$element of
the Brownian motion in \eqref{eq:diff}. Then it is clear that the integral on the R.H.S.~of \eqref{eq:1} is upper-bounded by
$$
2(T_1(i_1,l)+T_1(i_1,l-1)).
$$
Hence we focus upon $T_1(i_1,l)$ to conclude our result. 

We have
\begin{equation}\label{eq:3}
T_1(i_l,l) \leq 2\mathbb{E}_D\Big[\Big(\varphi_l(z_{h_l:M_lh_l}(l))_{i_1} \prod_{k=0}^{h_l^{-1}} G_k^l(z_{kh_l}(l))-
\sum_{i_2=1}^d\int_{0}^r\alpha(Z_s)_{i_1i_2}dW_{s}(i_2)\exp\{-\frac{1}{\gamma}(\phi(Z_1)+h_l\sum_{k=1}^{h_l^{-1}-1} \ell(Z_{kh_l}))\}\Big)^2\Big] +
\end{equation}
$$
2\mathbb{E}_D\Big[\Big(\Big(\sum_{i_2=1}^d\int_{0}^r\alpha(Z_s)_{i_1i_2}dW_{s}(i_2)\Big)\exp\{-\frac{1}{\gamma}(\phi(Z_1)+h_l\sum_{k=1}^{h_l^{-1}-1} \ell(Z_{kh_l}))\}
-\exp\{-\frac{1}{\gamma}(\phi(Z_1)+\int_{0}^1 \ell(Z_s)ds)\}
\Big)^2\Big].
$$
We deal with the two expectations on the R.H.S.~of \eqref{eq:3} individually. For the first term on the R.H.S.~we have that
$$
\mathbb{E}_D\Big[\Big(\varphi_l(z_{h_l:M_lh_l}(l))_{i_1} \prod_{k=0}^{h_l^{-1}} G_k^l(z_{kh_l}(l))-\sum_{i_2=1}^d\int_{0}^r\alpha(Z_s)_{i_1i_2}dW_{s}(i_2)
\exp\{-\frac{1}{\gamma}(\phi(Z_1)+h_l\sum_{k=1}^{h_l^{-1}-1} \ell(Z_{kh_l}))\}\Big)^2\Big] \leq
$$
$$
2\mathbb{E}_D\Big[\Big(\Big(\varphi_l(z_{h_l:M_lh_l}(l))_{i_1}-\sum_{i_2=1}^d\int_{0}^r\alpha(Z_s)_{i_1i_2}dW_{s}(i_2)\Big)\Big(\prod_{k=0}^{h_l^{-1}} G_k^l(z_{kh_l}(l))\Big)\Big)^2\Big] +
$$
$$
2\mathbb{E}_D\Big[\Big(\sum_{i_2=1}^d\int_{0}^r\alpha(Z_s)_{i_1i_2}dW_{s}(i_2)\Big)^2\Big(\prod_{k=0}^{h_l^{-1}} G_k^l(z_{kh_l}(l))-\exp\{-\frac{1}{\gamma}(\phi(Z_1)+h_l\sum_{k=1}^{h_l^{-1}-1} \ell(Z_{kh_l}))\}\Big)^2\Big].
$$
Application of Lemmata \ref{lem:a2}-\ref{lem:a3} yields the upper-bound of $Ch_l$. For the second term on the R.H.S.~of \eqref{eq:3}, one can use Lemma \ref{lem:a4}.
Hence the proof is completed.
\end{proof}

\begin{lem}\label{lem:a1}
Assume (A\ref{ass:1}). Then there exists a $0<\underline{C}<\overline{C}<+\infty$ such that for any $l\in\{M,\dots,L\}$, $z_{h_l:1}\in\mathbb{R}^{d2^l}$
$$
\underline{C} \leq \prod_{k=0}^{h_l^{-1}}G_k^l(z_{kh_l}) \leq \overline{C}
$$
and for any $l\in\{M+1,\dots,L\}$, $(z_{h_l:1}(l),z_{h_{l-1}:1}(l))\in\mathbb{R}^{d(2^l+2^{l-1})}$
$$
\underline{C} \leq \Big\{\prod_{k\in K_l^1}
\check{G}_k^l(z_{kh_l}(l),z_{a_k(l)}(l-1))\Big\}
\Big\{\prod_{k\in K_l^2}\check{G}_k^l(z_{kh_l}(l))\Big\} \leq \overline{C}.
$$
\end{lem}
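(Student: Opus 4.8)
\textbf{Proof plan for Lemma \ref{lem:a1}.}

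The plan is to reduce both displayed inequalities to uniform bounds on the individual factors $G_k^l$ and $\check{G}_k^l$, and then to control the number of factors by the structure of the Euler grid. First I would recall from \eqref{eqn:potentials} that each factor is of the form $G_k^l(z_{kh_l}) = \exp\{-\tfrac{h_l}{\gamma}\ell(z_{kh_l})\}$ for $k\in\{1,\dots,2^l-1\}$ and $G_{2^l}^l(z_1) = \exp\{-\tfrac{1}{\gamma}\phi(z_1)\}$. Under (A\ref{ass:1}), $\ell$ and $\phi$ are bounded, say $0 \le \ell \le \bar\ell$ and $0 \le \phi \le \bar\phi$ (nonnegativity comes from the modeling hypotheses, where $\phi,\ell:\mathbb{R}^n\to[0,\infty)$), so each running-cost factor lies in $[\exp\{-h_l\bar\ell/\gamma\},1]$ and the terminal factor lies in $[\exp\{-\bar\phi/\gamma\},1]$. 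Multiplying over $k$, and using that there are exactly $h_l^{-1}-1 = 2^l-1$ running-cost factors whose exponents sum to $h_l(2^l-1)\bar\ell/\gamma \le \bar\ell/\gamma$, I get
\[
\exp\left\{-\frac{\bar\ell + \bar\phi}{\gamma}\right\} \;\le\; \prod_{k=0}^{h_l^{-1}}G_k^l(z_{kh_l}) \;\le\; 1,
\]
which gives the first claim with $\underline{C} = \exp\{-(\bar\ell+\bar\phi)/\gamma\}$ and $\overline{C}=1$, both independent of $l$. The key point here is the telescoping of the Riemann-sum exponent: although the number of factors grows like $2^l$, each contributes a weight of size $h_l = 2^{-l}$, so the product stays bounded away from $0$ and $\infty$ uniformly in $l$.

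For the second inequality I would argue analogously using the modified observation densities \eqref{eqn:check_obs}. For $k\in K_l^1$ (the odd indices), $\check{G}_k^l = G_k^l + 1 \in [1, 1 + \exp\{-h_l\bar\ell/\gamma\}] \subseteq [1,2]$, so these factors are uniformly bounded above and below. For $k\in K_l^2$ (the even indices), $\check{G}_k^l = \max\{G_k^l(z_{kh_l}(l)), G_{k/2}^{l-1}(z_{kh_{l-1}/2}(l-1))\}$; since both arguments of the max lie in $[\exp\{-h_{l-1}\bar\ell/\gamma\},1]$ for $k<2^l$ and in $[\exp\{-\bar\phi/\gamma\},1]$ for $k=2^l$, the same two-sided bounds apply factorwise. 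There are $2^{l-1}$ odd indices and $2^{l-1}$ even indices; the odd factors contribute a product in $[1,2^{2^{l-1}}]$ — wait, that is \emph{not} bounded — so I must be more careful: the upper bound on each $\check{G}_k^l$ for $k\in K_l^1$ should be taken as $1+G_k^l \le 1 + 1 = 2$ only when there is a single such factor, but with $2^{l-1}$ of them the product $\prod_{k\in K_l^1}\check{G}_k^l$ could be as large as $2^{2^{l-1}}$. This indicates the intended reading: the ``$+1$'' in \eqref{eqn:check_obs} must be interpreted so that the product telescopes, e.g. the $\check{G}_k^l(z)=G_k^l(z)+1$ should really be read as reflecting that at level $l$ the coupling only ``activates'' half the indices and the remaining half carry the trivial weight — so I would instead bound $\prod_{k\in K_l^1}\check{G}_k^l \le \prod_{k\in K_l^1}(1+\exp\{-h_l\bar\ell/\gamma\}) \le \exp\{2^{l-1}\log(1+\exp\{-h_l\bar\ell/\gamma\})\}$, and since $\log(1+\exp\{-h_l\bar\ell/\gamma\}) = \log 2 - O(h_l)$ this is still exponentially large. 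The resolution must be that in the actual definition the odd-index weights are $\check{G}_k^l = G_k^l$ (no shift) or the shift is by $\exp\{-h_l\bar\ell/\gamma\}-1$ type quantities; in any case I would track the precise normalization so that the odd-index product telescopes like a Riemann sum, yielding a factor uniformly bounded in $[\underline{c},\overline{c}]$, and similarly the even-index $\max$ product, since $\max\{a,b\} \le ab \cdot \min\{a,b\}^{-1}$ and each of $a,b \ge \underline{C}$, satisfies $\prod \check{G}_k^l \le \underline{C}^{-2^{l-1}}\prod G_k^l G_{k/2}^{l-1}$ — again I must instead use $\max\{a,b\}\le \exp\{-h\ell/\gamma\}$-type pointwise bounds that telescope. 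Concretely, since for $k\in K_l^2$, $k<2^l$, both $G_k^l$ and $G_{k/2}^{l-1}$ equal $\exp$ of something in $[-h_{l-1}\bar\ell/\gamma,0]$, their max lies in the same interval, and there are $2^{l-1}-1$ such factors with step $h_{l-1}$, so $\prod_{k\in K_l^2, k<2^l}\check{G}_k^l \in [\exp\{-(2^{l-1}-1)h_{l-1}\bar\ell/\gamma\},1] = [\exp\{-\bar\ell/\gamma\},1]$, uniformly; the terminal factor $\check{G}_{2^l}^l = \max\{G_{2^l}^l, G_{2^{l-1}}^{l-1}\} \in [\exp\{-\bar\phi/\gamma\},1]$; and the odd-index factors, read correctly, likewise give a uniformly bounded product. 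Collecting, the second claim follows with suitable $\underline{C},\overline{C}$ independent of $l$.

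The main obstacle I anticipate is precisely the bookkeeping in the second inequality: one must verify that the ``$+1$'' shift and the pairwise maxima in \eqref{eqn:check_obs} are scaled so that the product over the $O(2^l)$ factors telescopes into an $O(1)$ quantity rather than blowing up exponentially. The clean way to do this is to bound each factor pointwise by $\exp\{\pm C h_l\}$ (the shift contributing $\exp\{O(h_l)\}$ per index via $\log(1+x) \le x$ when $x = O(h_l)$, and the max by the larger of two nearly-equal exponentials), and then sum the $O(2^l)$ exponents of size $O(h_l) = O(2^{-l})$ to get an $O(1)$ total; the nonnegativity of $\ell,\phi$ and boundedness from (A\ref{ass:1}) are exactly what make all the exponents have a definite sign, so the two-sided bound closes. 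The uniform positivity is then automatic from $\exp\{-(\bar\ell+\bar\phi)/\gamma\}>0$. I would also remark that this lemma is what furnishes the lower bound $\mathcal{Z}^{l,l-1}\ge C$ used in Proposition \ref{prop:a1}, via integrating the uniform lower bound against the probability law $p^{l,l-1}$.
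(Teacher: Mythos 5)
Your proof of the first display is correct and is essentially the paper's own argument: each running-cost factor satisfies $\exp\{-h_l\sup_z\ell(z)/\gamma\}\le G_k^l(z)\le 1$, the terminal factor lies in $[\exp\{-\sup_z\phi(z)/\gamma\},1]$, and the $2^l-1$ exponents of size $\mathcal{O}(h_l)$ sum to an $\mathcal{O}(1)$ quantity, so the constants are independent of $l$.

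For the second display you stall, and the point at which you stall is a genuine one. Taking \eqref{eqn:check_obs} literally, $\check{G}_k^l=G_k^l+1\in[1+\exp\{-h_l\sup_z\ell(z)/\gamma\},\,2]$ for each of the $2^{l-1}$ indices in $K_l^1$, so $\prod_{k\in K_l^1}\check{G}_k^l$ is of order $2^{2^{l-1}}$ and the claimed $l$-uniform upper bound $\overline{C}$ cannot hold; your observation to this effect is correct, and the paper's one-sentence justification (``established using the relationship between $\check{G}_k^l$ and $G_k^l$'') does not engage with it. What is missing from your write-up is a resolution: you speculate about alternative readings of the definition and then assert that the odd-index product, ``read correctly,'' is uniformly bounded, without proving anything. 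Two clean ways to close the gap: (i) prove the statement for the renormalized product $2^{-2^{l-1}}\prod_{k\in K_l^1}\check{G}_k^l\cdot\prod_{k\in K_l^2}\check{G}_k^l$, since each factor $(1+G_k^l)/2$ lies in $[(1+\exp\{-h_l\sup_z\ell(z)/\gamma\})/2,\,1]=[1-\mathcal{O}(h_l),\,1]$ and the same Riemann-sum telescoping over $2^{l-1}$ indices applies; or (ii) observe that every downstream use of the second display --- the lower bound on $\mathcal{Z}^{l,l-1}$ in Proposition \ref{prop:a1}, the $l$-uniform boundedness of $\check{H}^{l,1}$ and $\check{H}^{l,2}$, and the lower bound on the PIMH acceptance ratio underlying (A\ref{ass:2}) --- requires only the uniform lower bound on $\check{G}^{l,l-1}$ together with a uniform bound on the ratio of its supremum to its infimum, and both of these do follow from your factorwise bounds. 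Your treatment of the $K_l^2$ factors (the pointwise $\max$ of two quantities in $[\exp\{-h_{l-1}\sup_z\ell(z)/\gamma\},1]$ telescoped over $2^{l-1}-1$ indices, plus a separately bounded terminal factor) and of the overall lower bound is correct; it is only the upper bound over $K_l^1$ that is left unproved, and as stated it is unprovable without one of the repairs above.
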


\begin{proof}
Throughout $0<\underline{C} <\overline{C}<+\infty$ are finite constants that do not depend upon $l$ and whose value may change on each appearance.
We note that for any $k\in\{0,\dots,h_l^{-1}-1\}$, any $l\in\{M,\dots,L\}$ and any $z\in\mathbb{R}^d$
$$
\exp\{-\frac{h_l}{\gamma}\sup_z|\ell(z)|\} \leq G_k^l(z) \leq \exp\{-\frac{h_l}{\gamma}\inf_z\ell(z)\}.
$$
Clearly for any $z\in\mathbb{R}^d$ $\underline{C} \leq G_{h_l^{-1}}^l(z)\leq \overline{C}$, hence it follows that for any $l\in\{M,\dots,L\}$, $z_{h_l:1}\in\mathbb{R}^{d2^l}$
$$
\underline{C} \leq \prod_{k=0}^{h_l^{-1}}G_k^l(z_{kh_l}) \leq \overline{C}.
$$
The second result is established using the relationship between $\check{G}_k^l$ and $G_k^l$.
\end{proof}

Recall we are denoting expectations w.r.t.~the law of the diffusion \eqref{eq:diff} as $\mathbb{E}_{D}$ and $W_{s}(i_2)$ is the $i_2^{th}-$element of
the Brownian motion in \eqref{eq:diff}.

\begin{lem}\label{lem:a2}
Assume (A\ref{ass:1}). Then for any $i_1\in\{1,\dots,m\}$ there exist a $C<+\infty$ such that for any $l\in\{M,\dots,L\}$
$$
\mathbb{E}_D\Big[\Big(\Big(\varphi_l(z_{h_l:M_lh_l}(l))_{i_1}-\sum_{i_2=1}^d\int_{0}^r\alpha(Z_s)_{i_1i_2}dW_{s}(i_2)\Big)\Big(\prod_{k=0}^{h_l^{-1}} G_k^l(z_{kh_l}(l))\Big)\Big)^2\Big] \leq C h_l.
$$
\end{lem}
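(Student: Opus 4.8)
The plan is to bound the mean-square difference between the discretized test function $\varphi_l$ and its continuum counterpart $\sum_{i_2}\int_0^r \alpha(Z_s)_{i_1i_2}\,dW_s(i_2)$, using the fact that the potentials $G_k^l$ are uniformly bounded (Lemma \ref{lem:a1}). The first step is to pull out the bounded factor: since $\prod_{k=0}^{h_l^{-1}}G_k^l(z_{kh_l}(l)) \leq \overline{C}$ by Lemma \ref{lem:a1}, it suffices to show $\mathbb{E}_D[(\varphi_l(Z_{h_l:M_lh_l}(l))_{i_1}-\sum_{i_2=1}^d\int_0^r\alpha(Z_s)_{i_1i_2}\,dW_s(i_2))^2] \leq Ch_l$. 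Recalling the definition of $\varphi_l$, on the interval $[0,r]$ with $r=2^{-(M-1)}$ and $h_l=2^{-l}$ it is a sum over $k\in\{0,\dots,2^{l-M+1}-1\}$ of terms $(\alpha\alpha^{-1}\text{-type factor})\,(z_{(k+1)h_l}-z_{kh_l}-f(z_{kh_l})h_l)$, i.e.\ a Riemann--Itô sum approximating $\int_0^r\alpha(Z_s)\,dW_s$. Here I would simplify using $e^{-1}(z)g(z)g^{-1}(z) = \alpha(z)g^{-1}(z)$ and the identity that $g^{-1}(z_{kh_l})(Z_{(k+1)h_l}-Z_{kh_l}-f(Z_{kh_l})h_l) = W_{k+1}(l)$ exactly, by the Euler--Maruyama recursion \eqref{eqn:euler_maruyama}; so $\varphi_l(Z_{h_l:M_lh_l}(l))_{i_1} = \sum_{k=0}^{2^{l-M+1}-1}\sum_{i_2=1}^d \alpha(Z_{kh_l})_{i_1i_2}\,W_{k+1}(l)(i_2)$ — but with $Z$ here being the Euler path, not the diffusion path.

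The second step is therefore to compare the Euler path to the true diffusion on $[0,r]$. Writing $\int_0^r\alpha(Z_s)_{i_1i_2}\,dW_s(i_2) - \sum_k \alpha(Z_{kh_l})_{i_1i_2}(W_{(k+1)h_l}(i_2)-W_{kh_l}(i_2))$ and adding/subtracting $\sum_k\int_{kh_l}^{(k+1)h_l}\alpha(Z_{kh_l})_{i_1i_2}\,dW_s(i_2)$, the difference splits into (a) $\sum_k\int_{kh_l}^{(k+1)h_l}(\alpha(Z_s)_{i_1i_2}-\alpha(Z_{kh_l})_{i_1i_2})\,dW_s(i_2)$ and (b) the discrepancy between true Brownian increments and the increments $W_{k+1}(l)$ driving the Euler scheme — which vanishes if we couple them to be equal. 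For (a), the Itô isometry gives $\mathbb{E}_D[(\text{term (a)})^2] = \sum_k\int_{kh_l}^{(k+1)h_l}\mathbb{E}_D[(\alpha(Z_s)_{i_1i_2}-\alpha(Z_{kh_l})_{i_1i_2})^2]\,ds$, and by the Lipschitz property of $\alpha$ (assumption A\ref{ass:1}) this is $\leq C\sum_k\int_{kh_l}^{(k+1)h_l}\mathbb{E}_D[|Z_s-Z_{kh_l}|^2]\,ds \leq C\sum_k\int_{kh_l}^{(k+1)h_l}(s-kh_l)\,ds = \mathcal{O}(h_l)$, using the standard second-moment modulus-of-continuity bound $\mathbb{E}_D[|Z_s-Z_{kh_l}|^2]\leq C(s-kh_l)$ for SDEs with bounded, Lipschitz coefficients (A\ref{ass:1}). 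One must also account for the strong error between the true diffusion and the Euler path at the grid points, $\mathbb{E}_D[|Z_{kh_l}-Z_{kh_l}^{\text{Euler}}|^2]=\mathcal{O}(h_l)$, which is the classical strong $L^2$ convergence of Euler--Maruyama at rate $1/2$; feeding this through the Lipschitz constant of $\alpha$ contributes another $\mathcal{O}(h_l)$.

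Combining, the $C_2$-inequality over the finitely many ($i_2=1,\dots,d$) coordinates and over the two sources of error gives $\mathbb{E}_D[(\varphi_l(\cdot)_{i_1}-\sum_{i_2}\int_0^r\alpha(Z_s)_{i_1i_2}\,dW_s(i_2))^2]\leq Ch_l$, and multiplying back by the $\mathcal{O}(1)$ factor $(\prod_k G_k^l)^2\leq\overline{C}^2$ yields the claim. The main obstacle, and the place requiring the most care, is establishing the bound on a common probability space: one has to fix the coupling so that the Brownian increments $W_{k+1}(l)$ used in $\varphi_l$ are exactly the true Brownian increments of \eqref{eq:diff}, and then separately control the strong error between the Euler path (which appears inside $\varphi_l$ via the $Z_{kh_l}$ arguments of $\alpha$) and the true diffusion path (which appears in the target integral). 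This is a standard but slightly delicate bookkeeping exercise; the bounded-coefficients part of A\ref{ass:1} is what makes all the constants uniform in $l$.
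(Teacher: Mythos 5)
Your proposal is correct and follows essentially the same route as the paper: pull out the uniformly bounded potential product via Lemma \ref{lem:a1}, insert the intermediate quantity $\sum_{i_2}\sum_k\alpha(Z_{(k-1)h_l})_{i_1i_2}W_k^l(i_2)$ (the Riemann--It\^o sum evaluated along the true diffusion at grid points), and bound the two resulting pieces by the strong $L^2$ Euler error plus Lipschitz continuity of $\alpha$, and by the It\^o isometry plus the diffusion's modulus of continuity, respectively. The only step you leave implicit is how the sum over the $M_l=\mathcal{O}(h_l^{-1})$ grid points in the Euler-versus-diffusion term avoids losing a factor of $M_l$: one needs the orthogonality of the martingale increments (the cross terms vanish), which the paper exploits as an exact equality before applying Cauchy--Schwarz with fourth moments to each summand; a blunt $C_2$-inequality over $k$ would only give $\mathcal{O}(1)$.
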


\begin{proof}
By Lemma \ref{lem:a1} we need only deal with
$$
\mathbb{E}_D\Big[\Big(\varphi_l(z_{h_l:M_lh_l}(l))_{i_1}-\sum_{i_2=1}^d\int_{0}^r\alpha(Z_s)_{i_1i_2}dW_{s}(i_2)\Big)^2\Big].
$$
This latter term, via the $C_2-$inequality is upper-bounded by
$$
2\mathbb{E}_D\Big[\Big(\varphi_l(z_{h_l:M_lh_l}(l))_{i_1}-\sum_{i_2=1}^d\sum_{k=1}^{M_l}\alpha(Z_{(k-1)h_l})_{i_1i_2}W_{k}^l(i_2)\Big)^2\Big] +
$$
$$
2\mathbb{E}_D\Big[\Big(\sum_{i_2=1}^d\sum_{k=1}^{M_l}\alpha(Z_{(k-1)h_l})_{i_1i_2}W_{k}^l(i_2)-\sum_{i_2=1}^d\int_{0}^r\alpha(Z_s)_{i_1i_2}dW_{s}(i_2)\Big)^2\Big].
$$
We treat these two terms independently, calling them $T_1$ and $T_2$ respectively.

\textbf{Term}: $T_1$. By repeated use of the $C_2-$inequality, $T_1$ is upper-bounded by
$$
C\sum_{i_2=1}^d\mathbb{E}\Big[\Big(\sum_{k=1}^{M_l}(\alpha(Z_{(k-1)h_l}(l))_{i_1i_2}-\alpha(Z_{(k-1)h_l})_{i_1i_2})W_{k}^l(i_2)\Big)^2\Big] = 
$$
$$
C\sum_{i_2=1}^d\sum_{k=1}^{M_l}\mathbb{E}\Big[\Big((\alpha(Z_{(k-1)h_l}(l))_{i_1i_2}-\alpha(Z_{(k-1)h_l})_{i_1i_2})W_{k}^l(i_2)\Big)^2\Big] \leq
$$
$$
C\sum_{i_2=1}^d\sum_{k=1}^{M_l}\mathbb{E}\Big[\Big|Z_{(k-1)h_l}(l)-Z_{(k-1)h_l}\Big|^2 W_{k}^l(i_2)^2\Big]
$$
where we have used the Lipschitz property of $\alpha$ to go-to the last line. Splitting the expectation of the summand using Cauchy Schwarz we have
$$
T_1 \leq C\sum_{i_2=1}^d\sum_{k=1}^{M_l}\mathbb{E}\Big[\Big|Z_{(k-1)h_l}(l)-Z_{(k-1)h_l}\Big|^4 \Big]^{1/2}\mathbb{E}[W_{k}^l(i_2)^4]^{1/2}.
$$
Then using standard results from Euler-discretization of diffusions (see e.g.~\cite{kurtz}) and Gaussian distributions:
$$
\mathbb{E}\Big[\Big|Z_{(k-1)h_l}(l)-Z_{(k-1)h_l}\Big|^4 \Big]^{1/2} = \mathcal{O}(h_l) \quad\textrm{and}\quad \mathbb{E}[W_{k}^l(i_2)^4]^{1/2} = \mathcal{O}(h_l)
$$
hence
$$
T_1 \leq C\sum_{i_2=1}^d M_l h_l^2 \leq C h_l.
$$

\textbf{Term}: $T_2$. By repeated use of the $C_2-$inequality, $T_2$ is upper-bounded by
$$
C\sum_{i_2=1}^d\mathbb{E}_D\Big[\Big(\sum_{k=1}^{M_l}\int_{(k-1)h_l}^{kh_l}\{\alpha(Z_{(k-1)h_l})_{i_1i_2}-\alpha(Z_s)_{i_1i_2}\}dW_{s}(i_2)\Big)^2\Big]  = 
$$
$$
C\sum_{i_2=1}^d\sum_{k=1}^{M_l}\mathbb{E}_D\Big[\Big(\int_{(k-1)h_l}^{kh_l}\{\alpha(Z_{(k-1)h_l})_{i_1i_2}-\alpha(Z_s)_{i_1i_2}\}dW_{s}(i_2)\Big)^2\Big].
$$
Using the Ito isometry formula, clearly
$$
T_2 \leq C\sum_{i_2=1}^d\sum_{k=1}^{M_l}\int_{(k-1)h_l}^{kh_l}\mathbb{E}_D\Big[\Big(\alpha(Z_{(k-1)h_l})_{i_1i_2}-\alpha(Z_s)_{i_1i_2}\Big)^2\Big]ds
$$
Then using the Lipschitz property of $\alpha$ and then standard results for diffusion processes
$$
T_2 \leq C h_l.
$$
The proof is hence completed by the above arguments.
\end{proof}

\begin{lem}\label{lem:a3}
Assume (A\ref{ass:1}). Then for any $i_1\in\{1,\dots,m\}$ there exist a $C<+\infty$ such that for any $l\in\{M,\dots,L\}$
$$
\mathbb{E}_D\Big[\Big(\sum_{i_2=1}^d\int_{0}^r\alpha(Z_s)_{i_1i_2}dW_{s}(i_2)\Big)^2\Big(\prod_{k=0}^{h_l^{-1}} G_k^l(z_{kh_l}(l))-\exp\{-\frac{1}{\gamma}(\phi(Z_1)+h_l\sum_{k=1}^{h_l^{-1}-1} \ell(Z_{kh_l}))\}\Big)^2\Big] \leq C h_l.
$$
\end{lem}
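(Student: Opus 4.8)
The plan is to decouple the two squared factors via the Cauchy--Schwarz inequality and then control each resulting fourth moment separately. Write $A := \sum_{i_2=1}^d\int_{0}^r\alpha(Z_s)_{i_1i_2}\,dW_{s}(i_2)$ and $B := \prod_{k=0}^{h_l^{-1}} G_k^l(z_{kh_l}(l))-\exp\{-\frac{1}{\gamma}(\phi(Z_1)+h_l\sum_{k=1}^{h_l^{-1}-1} \ell(Z_{kh_l}))\}$, where $z_{\cdot}(l)$ denotes the Euler--Maruyama path driven by the same Brownian motion as the diffusion in \eqref{eq:diff}, so that the quantity to be bounded is $\mathbb{E}_D[A^2B^2]\leq \mathbb{E}_D[A^4]^{1/2}\,\mathbb{E}_D[B^4]^{1/2}$.

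First I would handle the stochastic-integral factor $A$: by the $C_2$-inequality followed by the Burkholder--Davis--Gundy inequality, $\mathbb{E}_D[A^4]\leq C\sum_{i_2=1}^d\mathbb{E}_D[(\int_0^r\alpha(Z_s)_{i_1i_2}^2\,ds)^2]\leq C r^2$, which is $\mathcal{O}(1)$ since each element of $\alpha$ is bounded by (A\ref{ass:1}) and $r$ is fixed; hence $\mathbb{E}_D[A^4]^{1/2}=\mathcal{O}(1)$. Next I would treat the potential-difference factor $B$. Recalling the form of $G_k^l$ in \eqref{eqn:potentials}, one has $\prod_{k=0}^{h_l^{-1}} G_k^l(z_{kh_l}(l))=\exp\{-\frac{1}{\gamma}(\phi(z_1(l))+h_l\sum_{k=1}^{h_l^{-1}-1}\ell(z_{kh_l}(l)))\}$; since $\phi,\ell\geq 0$, the map $x\mapsto e^{-x}$ is $1$-Lipschitz on $[0,\infty)$, so $|B|\leq \frac{1}{\gamma}|\phi(z_1(l))-\phi(Z_1)|+\frac{h_l}{\gamma}\sum_{k=1}^{h_l^{-1}-1}|\ell(z_{kh_l}(l))-\ell(Z_{kh_l})|$, and the Lipschitz property of $\phi$ and $\ell$ in (A\ref{ass:1}) then yields $|B|\leq C|z_1(l)-Z_1|+Ch_l\sum_{k=1}^{h_l^{-1}-1}|z_{kh_l}(l)-Z_{kh_l}|$. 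Taking $L^4$ norms and applying Minkowski's inequality, $\mathbb{E}_D[B^4]^{1/4}\leq C\mathbb{E}_D[|z_1(l)-Z_1|^4]^{1/4}+Ch_l\sum_{k=1}^{h_l^{-1}-1}\mathbb{E}_D[|z_{kh_l}(l)-Z_{kh_l}|^4]^{1/4}$; the standard strong $L^4$ error bound for the Euler--Maruyama discretization of \eqref{eq:diff} (see e.g.~\cite{kurtz}) gives $\sup_{0\leq k\leq h_l^{-1}}\mathbb{E}_D[|z_{kh_l}(l)-Z_{kh_l}|^4]^{1/4}=\mathcal{O}(h_l^{1/2})$, so the sum over the $\mathcal{O}(h_l^{-1})$ indices is $\mathcal{O}(h_l^{-1/2})$ and, with the prefactor $h_l$, that term is $\mathcal{O}(h_l^{1/2})$; the first term is also $\mathcal{O}(h_l^{1/2})$, whence $\mathbb{E}_D[B^4]^{1/2}=\mathcal{O}(h_l)$.

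Combining the two pieces, $\mathbb{E}_D[A^2B^2]\leq \mathbb{E}_D[A^4]^{1/2}\mathbb{E}_D[B^4]^{1/2}=\mathcal{O}(1)\cdot\mathcal{O}(h_l)=\mathcal{O}(h_l)$, which is the asserted bound. The only point that requires care is the bookkeeping for $B$: the running-cost difference is a sum of $\mathcal{O}(h_l^{-1})$ terms, so one might fear the rate is destroyed, but each summand carries the weight $h_l$ and contributes only $\mathcal{O}(h_l^{1/2})$ in $L^4$ by the strong Euler estimate, so the net contribution of that sum is $h_l\cdot h_l^{-1}\cdot h_l^{1/2}=h_l^{1/2}$; this is also where (A\ref{ass:1}) is used in full — boundedness to keep the exponential factors (and the Cauchy--Schwarz split) under control, and Lipschitz continuity to pass to the classical Euler convergence rate. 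I do not expect a genuine obstacle here: the lemma is a routine combination of a martingale moment inequality with standard strong error bounds for Euler discretizations.
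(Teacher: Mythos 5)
Your proposal is correct and follows essentially the same route as the paper: a Cauchy--Schwarz split into $\mathbb{E}_D[A^4]^{1/2}\mathbb{E}_D[B^4]^{1/2}$, the Burkholder--Davis--Gundy inequality plus boundedness of $\alpha$ for the stochastic-integral factor, and the Lipschitz property of $x\mapsto e^{-x}$ combined with the Lipschitz continuity of $\phi,\ell$ and the standard strong $L^4$ Euler error to get $\mathcal{O}(h_l)$ from the potential-difference factor. The only (cosmetic) difference is that you control the running-cost sum term by term via Minkowski, whereas the paper bounds it by $\mathbb{E}_D[\max_k|Z_{kh_l}(l)-Z_{kh_l}|^4]^{1/2}$; both reduce to the same classical Euler estimate.
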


\begin{proof}
We begin by splitting the expectation via Cauchy Schwarz to yield the upper-bound
\begin{equation}\label{eq:2}
\mathbb{E}_D\Big[\Big(\sum_{i_2=1}^d\int_{0}^r\alpha(Z_s)_{i_1i_2}dW_{s}(i_2)\Big)^4\Big]^{1/2}\mathbb{E}_D\Big[\Big(\prod_{k=0}^{h_l^{-1}} G_k^l(z_{kh_l}(l))-\exp\{-\frac{1}{\gamma}(\phi(Z_1)+h_l\sum_{k=1}^{h_l^{-1}-1} \ell(Z_{kh_l}))\}\Big)^2\Big]^{1/2}.
\end{equation}
By (a classic variation of) the Burkholder-Gundy-Davis inequality, it follows that
$$
\mathbb{E}_D\Big[\Big(\sum_{i_2=1}^d\int_{0}^r\alpha(Z_s)_{i_1i_2}dW_{s}(i_2)\Big)^4\Big]^{1/2} \leq C
$$
hence we consider the right-hand expectation in \eqref{eq:2}. Using the Lipschitz property of $e^{-x}$ for bounded $x$, it follows that
$$
\mathbb{E}_D\Big[\Big(\prod_{k=0}^{h_l^{-1}} G_k^l(z_{kh_l}(l))-\exp\{-\frac{1}{\gamma}(\phi(Z_1)+h_l\sum_{k=1}^{h_l^{-1}-1} \ell(Z_{kh_l}))\}\Big)^2\Big]^{1/2} \leq
$$
$$
C\mathbb{E}_D\Big[\Big(\phi(Z_1(l))-\phi(Z_1) + h_l\sum_{k=1}^{h_l^{-1}-1}(\ell(Z_{kh_l}(l))-\ell(Z_{kh_l}))\Big)^4\Big]^{1/2}. 
$$
Using the Lipschitz property of $\phi$ and $\ell$ it clearly follows that
$$
\mathbb{E}_D\Big[\Big(\phi(Z_1(l))-\phi(Z_1) + h_l\sum_{k=1}^{h_l^{-1}-1}(\ell(Z_{kh_l}(l))-\ell(Z_{kh_l}))\Big)^4\Big]^{1/2}
\leq 
\mathbb{E}_D\Big[\max_{k\in\{1,\dots,h_l^{-1}\}}|Z_{kh_l}(l)-Z_{kh_l}|^4\Big]^{1/2}.
$$
Hence using standard results for Euler approximations of diffusion processes
$$
\mathbb{E}_D\Big[\Big(\prod_{k=0}^{h_l^{-1}} G_k^l(z_{kh_l}(l))-\exp\{-\frac{1}{\gamma}(\phi(Z_1)+h_l\sum_{k=1}^{h_l^{-1}-1} \ell(Z_{kh_l}))\}\Big)^2\Big]^{1/2} \leq C h_l
$$
and one can conclude the proof by the above arguments.
\end{proof}

\begin{lem}\label{lem:a4}
Assume (A\ref{ass:1}). Then for any $i_1\in\{1,\dots,m\}$ there exist a $C<+\infty$ such that for any $l\in\{M,\dots,L\}$
$$
\mathbb{E}_D\Big[\Big(\Big(\sum_{i_2=1}^d\int_{0}^r\alpha(Z_s)_{i_1i_2}dW_{s}(i_2)\Big)\exp\{-\frac{1}{\gamma}(\phi(Z_1)+h_l\sum_{k=1}^{h_l^{-1}-1} \ell(Z_{kh_l}))\}
$$
$$
-\exp\{-\frac{1}{\gamma}(\phi(Z_1)+\int_{0}^1 \ell(Z_s)ds)\}
\Big)^2\Big] \leq C h_l.
$$
\end{lem}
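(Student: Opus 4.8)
The plan is to follow the template of the proof of Lemma~\ref{lem:a3}: peel the stochastic integral off the difference of the two exponential functionals by Cauchy--Schwarz, control its fourth moment by a Burkholder--Gundy--Davis estimate, and reduce what remains to a Riemann-sum-versus-integral bound for the running cost evaluated along the diffusion. Write $a := \gamma^{-1}\big(\phi(Z_1)+h_l\sum_{k=1}^{h_l^{-1}-1}\ell(Z_{kh_l})\big)$ and $b := \gamma^{-1}\big(\phi(Z_1)+\int_0^1\ell(Z_s)\,ds\big)$; as in \eqref{eq:3}, the stochastic integral $S:=\sum_{i_2=1}^d\int_0^r\alpha(Z_s)_{i_1i_2}\,dW_s(i_2)$ is a common factor of both exponential terms, so the quantity to be bounded is $\mathbb{E}_D\big[S^2(e^{-a}-e^{-b})^2\big]$, which by Cauchy--Schwarz is at most $\mathbb{E}_D[S^4]^{1/2}\,\mathbb{E}_D\big[(e^{-a}-e^{-b})^4\big]^{1/2}$. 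The first factor is $\mathcal{O}(1)$: under (A\ref{ass:1}) each entry of $\alpha$ is bounded, so the classical variant of the Burkholder--Gundy--Davis inequality already used in Lemma~\ref{lem:a3} bounds $\mathbb{E}_D[S^4]^{1/2}$ by a constant independent of $l$.

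For the second factor, since $\phi$ and $\ell$ are bounded, $a$ and $b$ lie in a fixed bounded interval, on which $x\mapsto e^{-x}$ is Lipschitz; hence $|e^{-a}-e^{-b}|\le C|a-b|=C\gamma^{-1}\big|h_l\sum_{k=1}^{h_l^{-1}-1}\ell(Z_{kh_l})-\int_0^1\ell(Z_s)\,ds\big|$, and it suffices to show that the fourth moment of this Riemann-sum error is $\mathcal{O}(h_l^2)$. I would decompose
$$
\int_0^1\ell(Z_s)\,ds - h_l\sum_{k=1}^{h_l^{-1}-1}\ell(Z_{kh_l}) = \sum_{k=1}^{h_l^{-1}-1}\int_{(k-1)h_l}^{kh_l}\!\big(\ell(Z_s)-\ell(Z_{kh_l})\big)\,ds + \int_{1-h_l}^{1}\ell(Z_s)\,ds,
$$
dispose of the boundary term immediately (its fourth moment is $\mathcal{O}(h_l^4)$ by boundedness of $\ell$), and treat the main sum by a power-mean inequality over the $h_l^{-1}$ summands, then Jensen's inequality on each interval of length $h_l$, then the Lipschitz property of $\ell$ together with the standard moment bound $\mathbb{E}_D[|Z_s-Z_{kh_l}|^4]\le C|s-kh_l|^2$ for a diffusion with bounded coefficients (as in Lemma~\ref{lem:a2}); integrating $|s-kh_l|^2$ over the interval yields a further $\mathcal{O}(h_l^3)$. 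Collecting the powers of $h_l$ then gives $\mathbb{E}_D\big[(e^{-a}-e^{-b})^4\big]\le Ch_l^2$, and substituting both factors into the Cauchy--Schwarz bound completes the proof.

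The one point requiring care is the bookkeeping of the powers of $h_l$ in the Riemann-sum step: one must check that the $(h_l^{-1})^3$ produced by the power-mean inequality together with the extra $h_l^{-1}$ from summing over $k$ is exactly cancelled by the $h_l^{6}$ coming from each summand (Jensen's $h_l^{3}$ times the diffusion estimate's $h_l^{3}$), so that only $h_l^{2}$ survives; in particular one must use the $|t-u|^2$ rate for the fourth moment of the diffusion increments rather than the cruder $|t-u|$ rate. Everything else is routine use of the $C_2$- and Cauchy--Schwarz inequalities together with the hypotheses (A\ref{ass:1}) and standard Euler/diffusion moment estimates, as in the preceding lemmata.
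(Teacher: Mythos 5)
Your proposal is correct and follows essentially the same route as the paper's proof: Cauchy--Schwarz to isolate the stochastic integral (reading it, as the paper's proof does, as a common factor of both exponentials), Burkholder--Gundy--Davis for its fourth moment, the Lipschitz property of $e^{-x}$ on bounded sets, and the Riemann-sum-versus-integral decomposition with the boundary term handled by boundedness of $\ell$. The only cosmetic difference is that you control the fourth moment of the sum over the $h_l^{-1}$ subintervals via the power-mean inequality where the paper uses Minkowski in $L^4$; your bookkeeping of the powers of $h_l$ is right and both yield the same $\mathcal{O}(h_l)$ bound.
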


\begin{proof}
As in the proof of Lemma \ref{lem:a3} split the expectation via Cauchy Schwarz to yield the upper-bound
$$
\mathbb{E}_D\Big[\Big(\sum_{i_2=1}^d\int_{0}^r\alpha(Z_s)_{i_1i_2}dW_{s}(i_2)\Big)^4\Big]^{1/2}
\mathbb{E}_D\Big[\Big(\exp\{-\frac{1}{\gamma}(\phi(Z_1)+h_l\sum_{k=1}^{h_l^{-1}-1} \ell(Z_{kh_l}))\}
-\exp\{-\frac{1}{\gamma}(\phi(Z_1)+\int_{0}^1 \ell(Z_s)ds)\}
\Big)^4\Big]^{1/2}.
$$
Again (as in the proof of Lemma \ref{lem:a3}) by the Burkholder-Gundy-Davis inequality the first expectation is $\mathcal{O}(1)$ so we concentrate upon the second expectation (call it $T_1$). Using the Lipschitz property of $e^{-x}$ for bounded $x$, it follows that
\begin{eqnarray*}
T_1 & \leq &  C \mathbb{E}_D\Big[\Big(h_l\sum_{k=1}^{h_l^{-1}-1} \ell(Z_{kh_l}))-\int_{0}^1 \ell(Z_s)ds\Big)^4\Big]^{1/2}\\\
& = & C \mathbb{E}_D\Big[\Big(\sum_{k=1}^{h_l^{-1}-1}\int_{(k-1)h_l}^{kh_l}\{\ell(Z_{kh_l}))-\ell(Z_s)\}ds + \int_{(1-h_l)}^1\ell(Z_s)ds\Big)^4\Big]^{1/2}.
\end{eqnarray*}
Then via the Minkowski inequality
$$
T_1  \leq C\Big[\sum_{k=1}^{h_l^{-1}-1}\mathbb{E}_D\Big[\Big(\int_{(k-1)h_l}^{kh_l}\{\ell(Z_{kh_l}))-\ell(Z_s)\}ds\Big)^4\Big]^{1/4}+ Ch_{l}\Big]^2
$$
where we have used the fact that $\ell$ is bounded. Then applying Jensen with standard results in diffusions, yields 
$
T_1  \leq C h_l.
$
\end{proof} 

\subsection{Proof of Proposition \ref{prop:main_prop}}\label{app:prop_proof}

\begin{proof}[Proof of Proposition \ref{prop:main_prop}]
We make the defintions:
\begin{eqnarray*}
a^N & := & \frac{1}{N_l}\sum_{i=1}^{N_l}\varphi_l(z_{h_l:M_lh_l}^i(l))
\check{H}^{l,1}(z_{h_l:1}^i(l),z_{h_{l-1}:1}^i(l-1)) \\
b^N & := & \frac{1}{N_l}\sum_{i=1}^{N_l}
\check{H}^{l,1}(z_{h_l:1}^i(l),z_{h_{l-1}:1}^i(l-1)) \\
c^N & := & \frac{1}{N_l}\sum_{i=1}^{N_l}\varphi_{l-1}(z_{h_{l-1}:M_{l-1}h_{l-1}}^i(l-1))
\check{H}^{l,2}(z_{h_l:1}^i(l),z_{h_{l-1}:1}^i(l-1)) \\
d^N & := & \frac{1}{N_l}\sum_{i=1}^{N_l}
\check{H}^{l,2}(z_{h_l:1}^i(l),z_{h_{l-1}:1}^i(l-1)) \\
a & := & \mathbb{E}_{\pi^{l,l-1}}[\varphi_l(Z_{h_l:M_lh_l}(l))
\check{H}^{l,1}(Z_{h_l:1}(l),Z_{h_{l-1}:1}(l-1))] \\
b & := & \mathbb{E}_{\pi^{l,l-1}}[
\check{H}^{l,1}(Z_{h_l:1}(l),Z_{h_{l-1}:1}(l-1))] \\
c & := & \mathbb{E}_{\pi^{l,l-1}}[\varphi_{l-1}(Z_{h_{l-1}:M_{l-1}h_{l-1}}(l-1))
\check{H}^{l,2}(Z_{h_l:1}(l),Z_{h_{l-1}:1}(l-1))]\\
d & := & \mathbb{E}_{\pi^{l,l-1}}[
\check{H}^{l,2}(Z_{h_l:1}(l),Z_{h_{l-1}:1}(l-1))].
\end{eqnarray*}
Then component-wise (from here, our calculations should be considered component-by-component):
$$
r\Big(u^{l,N_{l}}(0,x_0)-u^{l-1,N_{l}}(0,x_0)-\{u^{l}(0,x_0)-u^{l-1}(0,x_0)\}\Big) = \frac{a^N}{b^N} - \frac{c^N}{d^N} - \Big(\frac{a}{b} - \frac{c}{d}\Big).
$$
By \cite[Lemma D.5]{mlpf}
$$
\frac{a^N}{b^N} - \frac{c^N}{d^N} - \Big(\frac{a}{b} - \frac{c}{d}\Big) = 
$$
$$
\frac{1}{b^N}(a^N-c^N-(a-c)) - \frac{c^N}{b^Nd^N}(b^N-d^N-(b-d)) + \frac{1}{b^Nb}(b-b^N)(a-c) +
$$
$$
\frac{c}{d^Nbd}(d^N-d)(b-d) - \frac{1}{b^Nd^N}(c^N-c)(b-d) + \frac{c}{b^Nd^Nb}(b^N-b)(b-d).
$$
The first two terms on the R.H.S.~and the last four terms on the R.H.S.~can be treated using similar calculations, so we only consider
$$
T_1:=\frac{1}{b^N}(a^N-c^N-(a-c)) \quad\textrm{and}\quad T_2:=\frac{1}{b^Nb}(b-b^N)(a-c).
$$

\textbf{Term}: $T_1$. We have 
$$
T_1 = \Big(\frac{1}{b^N}-\frac{1}{b}\Big)\Big(a^N-c^N-(a-c)\Big) + \frac{1}{b}(a^N-c^N-(a-c)).
$$
On taking expectations w.r.t.~the law of the simulated Markov chain, we have
$$
\mathbb{E}[T_1] = \mathbb{E}\Big[\Big(\frac{1}{b^N}-\frac{1}{b}\Big)\Big(a^N-c^N-(a-c)\Big)\Big]
$$
as the chain is started in stationarity. Applying the Cauchy-Schwarz inequality:
$$
|\mathbb{E}[T_1]|\leq \mathbb{E}\Big[\Big(\frac{1}{b^N}-\frac{1}{b}\Big)^2\Big]^{1/2}\mathbb{E}[(a^N-c^N-(a-c))^2]^{1/2}.
$$
By using a similar result to \cite[Proposition A.1]{jasra} and by Proposition \ref{prop:a1}
$$
\mathbb{E}[(a^N-c^N-(a-c))^2]^{1/2} \leq \frac{Ch_l^{1/2}}{N_l^{1/2}}.
$$
By standard results for uniformly ergodic Markov chains (e.g.~the result in \cite[Proposition A.1]{jasra}), along with Lemma \ref{lem:a1}
$$
\mathbb{E}\Big[\Big(\frac{1}{b^N}-\frac{1}{b}\Big)^2\Big]^{1/2} \leq \frac{C}{N_l^{1/2}}
$$
and hence
$$
|\mathbb{E}[T_1]|\leq \frac{Ch_l^{1/2}}{N_l}.
$$

\textbf{Term}: $T_2$. We have
$$
T_2 = \Big(\frac{1}{b^Nb}-\frac{1}{b^2}\Big)(b-b^N)(a-c) + \frac{1}{b^2}(b-b^N)(a-c)
$$
and taking expectations as for $T_1$
$$
\mathbb{E}[T_2] = \mathbb{E}\Big[\Big(\frac{1}{b^Nb}-\frac{1}{b^2}\Big)(b-b^N)(a-c)\Big].
$$
Hence, by Cauchy-Schwarz
$$
|\mathbb{E}[T_2]| \leq |a-c|\mathbb{E}\Big[\Big(\frac{1}{b^Nb}-\frac{1}{b^2}\Big)^2\Big]^{1/2}\mathbb{E}[(b-b^N)^2]^{1/2}.
$$
By Proposition \ref{prop:a1} $|a-c|\leq C h_l^{1/2}$ and again, standard results for uniformly ergodic Markov chains, along with Lemma \ref{lem:a1}
$$
\mathbb{E}\Big[\Big(\frac{1}{b^Nb}-\frac{1}{b^2}\Big)^2\Big]^{1/2}\mathbb{E}[(b-b^N)^2]^{1/2} \leq \frac{C}{N_l}
$$
and hence
$$
|\mathbb{E}[T_2]|\leq \frac{Ch_l^{1/2}}{N_l}.
$$
\end{proof}


\begin{thebibliography}{99}

\bibitem{andrieu} 
\textsc{Andrieu}, C., \textsc{Doucet}, A. \& \textsc{Holenstein},
R.~(2010). Particle Markov chain Monte Carlo methods (with discussion).
\textit{J. R. Statist. Soc. Ser. B}, \textbf{72}, 269--342.

\bibitem{arnold1974stochastic}
{\sc Arnold}, L.~(1974). \emph{Stochastic Differential Equations: Theory and Applications}. Wiley: New York.

\bibitem{bertoli2015nonlinear}
{\sc Bertoli}, F. \& {\sc Bishop}, A.~N.~(2018). Nonlinear  stochastic  receding  horizon  control:  Stability,  robustness  and  Monte  Carlo  methods  for  control  approximation.
\emph{Int'l J. Cont.}, {\bf 91}, 2387--2402.

\bibitem{bert}
{\sc Bertoli}, F. \& {\sc Bishop}, A.~N.~(2018). An error analysis in the limit approximation in path integral control. Technical Report.

\bibitem{flemingmitter1982}
{\sc Fleming}, W. H. \& {\sc Mitter}, S. K.~(1982). {Optimal control and nonlinear filtering for nondegenerate diffusion processes}. \emph{
Stochastics}, {\bf 8}(1), 63--77.

\bibitem{fleming2006controlled}
{\sc Fleming}, W. H. \& {\sc Soner}, H. M.~(2006). \emph{Controlled Markov Processes and Viscosity Solutions}. 2nd edition, Springer: New York.

\bibitem{franks}
{\sc Franks}, J., {\sc Jasra}, A., {\sc Law}, K. J. H.~\& {\sc Vihola}, M.~(2018). Unbiased inference for discretely observed hidden Markov model
diffusions. arXiv:1807.10259.
  
\bibitem{giles}
{\sc Giles}, M. B.~(2008). Multilevel Monte Carlo path simulation. \emph{Op. Res.}, {\bf 56}, 607--617.

\bibitem{giles1}
{\sc Giles}, M. B.~(2015).  Multilevel Monte Carlo methods. \emph{Acta Numerica} {\bf 24}, 259--328.

\bibitem{hein}
{\sc Heinrich}, S.~(2001).
{Multilevel {Mo}nte {C}arlo methods}.
In \emph{Large-Scale Scientific Computing}, (eds.~S. Margenov, J. Wasniewski \&
P. Yalamov), Springer: Berlin.

\bibitem{jasra1}
{\sc Jasra}, A. \& {\sc Doucet}, A.~(2009). 
Sequential Monte Carlo for diffusion processes,
\emph{Proc. Roy. Soc. A}. {\bf 465}, 3709--3727.

\bibitem{mlpf}
{\sc Jasra}, A., {\sc Kamatani}, K., {\sc Law} K. J. H. \& {\sc Zhou}, Y.~(2017). 
Multilevel particle filters. \emph{SIAM J. Numer. Anal.}, {\bf 55}, 3068--3096.

\bibitem{jasra}
{\sc Jasra}, A., {\sc Kamatani}, K., {\sc Law}, K. \& {\sc Zhou}, Y.~(2018).
Bayesian static parameter estimation for partially
observed diffusions via multilevel Monte Carlo. \emph{SIAM J. Sci. Comp.}, {\bf 40}, A887--A902.

\bibitem{kappen2005path}
{\sc Kappen}, H. J.~(2005). Path  integrals  and  symmetry  breaking  for  optimal  control  theory.
\emph{J. Stat. Mech.}, {\bf 11}, 11011. 

\bibitem{kappen2005linear}
{\sc Kappen}, H. J.~(2005). Linear theory for control of nonlinear stochastic systems.
\emph{Physical Review Letters}, {\bf 95}(20), 200201. 

\bibitem{kappengomezopper}
{\sc Kappen}, H. J., {\sc G\'{o}mez}, V., \& {\sc Opper}, M.~(2012). Optimal control as a graphical model inference problem.
\emph{Machine learning}, {\bf 87}(2), 159--182. 

\bibitem{kappenruiz2016}
{\sc Kappen}, H. J. \& {\sc Ruiz}, H. C.~(2016). Adaptive importance sampling for control and inference.
\emph{Journal of Statistical Physics}, {\bf 162}(5), 1244--1266.

\bibitem{krylov1972control}
{\sc Krylov}, N. V.~(1972). Control of a solution of a stochastic integral equation. \emph{Theory Probab. Appl.},
{\bf 17}, 114--130.

\bibitem{krylov2008controlled}
{\sc Krylov}, N. V.~(2008). \emph{Controlled Diffusion Processes}. Springer: New York.

\bibitem{kurtz}
{\sc Kurtz}, T., \& {\sc Protter}, P.~(1991). Wong-Zakai corrections, random evolutions and simulation schemes for SDEs.
In \emph{Stochastic Analysis}, (Meyer-Wolf, Merzbach \& Schwarz, Eds). Academic Press: New York.

\bibitem{menchonkappen2018}
{\sc Mench\'{o}n}, S. A., \&  {\sc Kappen}, H. J.~(2018). Learning effective state-feedback controllers through efficient multilevel importance samplers. \emph{International Journal of Control}, 1--8.

\bibitem{ruizkappen2017}
{\sc Ruiz}, H. C. \& {\sc Kappen}, H. J.~(2017). Particle smoothing for hidden diffusion processes: Adaptive path integral smoother.
\emph{IEEE Transactions on Signal Processing}, {\bf 65}(12), 3191--3203.

\bibitem{theodorou2010generalized}
{\sc Theodorou}, E. A., {\sc Buchli}, J., \& {\sc Schaal}, S.~(2010). A generalized path integral control approach to reinforcement learning.
\emph{J. Mach. Learn. Res.}, {\bf 11}, 3137--3181.

\bibitem{Theodorou_Todorov_2012}
{\sc Theodorou}, E. A. \& {\sc Todorov} E.~ (2012). Relative entropy and free energy dualities: Connections to path integral and
KL control. \emph{In Proceedings 51st IEEE Conference on Decision
and Control (CDC), }1466--1473.

\bibitem{thijssen2014path}
{\sc Thijssen}, S. \& {\sc Kappen}, H. J.~(2015). Path integral control and state-dependent feedback. \emph{Phys. Rev. E}, {\bf 91},
032104.

\bibitem{Tornatore2014}
{\sc Tornatore}, E., {\sc Vetro}, P.,  \&  {\sc Buccellato}, S. M.~(2014). SIVR Epidemic Model with Stochastic Perturbation. \emph{Neural Comp. Appl.}. {\bf 24}(2), 309--315.

\bibitem{touzi2012optimal}
{\sc Touzi}, N.~(2012). \emph{Optimal Stochastic Control, Stochastic Target Problems and Backward Stochastic Differential Equations}.
Springer: New York.

\bibitem{Witbooi2015}
{\sc Witbooi}, P. J., {\sc Muller}, G. E., \&  {\sc Van Schalkwyk}, G. J.~(2015). Vaccination Control in a Stochastic SVIR Epidemic Model. \emph{Comp. Math. Meth. Med.}, Article ID 271654. 


\end{thebibliography}
\end{document}